\documentclass{amsart}

\usepackage[utf8]{inputenc}
\usepackage[T1]{fontenc}

\usepackage{amsfonts}

\usepackage{amssymb}
\usepackage{pst-node}

\usepackage{mathtools}

\usepackage{thmtools}
\usepackage{mathrsfs}
\usepackage{tcolorbox}
\usepackage{listings}
\usepackage{hyperref}
\usepackage{color}
\usepackage[usenames,dvipsnames]{xcolor}
\usepackage{float}
\usepackage{tikz, tikz-cd}
\usepackage{enumerate}
\usepackage[nameinlink, capitalize, noabbrev]{cleveref}
\usepackage{natbib}
\usepackage{caption}
\usetikzlibrary{intersections}
\usepackage{pgfplots}
\pgfplotsset{width=10cm,compat=1.9}
\usepgfplotslibrary{external}
\tikzset{mytext/.style={font=\small, text=black}}
\tikzset{main node/.style={circle,fill=lime!30,draw,minimum size=0.5cm,inner sep=0pt},}

\hypersetup{ colorlinks=true,linkcolor=teal,    citecolor=magenta, }

\usepackage{pst-node}
\usepackage{pst-tree}

\def\BALL[#1](#2){\rput[t](#2){}%
        \pscircle[fillstyle=solid,fillcolor=#1!40](#2){5pt}}
        
\psset{treesep=1.3,levelsep=1.5}

\hypersetup{
    colorlinks=true,
    linkcolor=teal,
    citecolor=magenta,
    }

\numberwithin{equation}{section}

\newtheorem{Theorem}{Theorem}
\newtheorem{Conjecture}[Theorem]{Conjecture}
\newtheorem{Corollary}[Theorem]{Corollary}
\newtheorem{Question}[Theorem]{Question}
\newtheorem{proposition}{Proposition}[section]
\newtheorem{lemma}[proposition]{Lemma}
\newtheorem{corollary}[proposition]{Corollary}
\newtheorem{theorem}[proposition]{Theorem}

\theoremstyle{definition}
\newtheorem{remark}[proposition]{Remark}
\newtheorem{definition}[proposition]{Definition}

\numberwithin{equation}{section}

\title[Arboreal Galois representations of rational functions]{Arboreal Galois representations of rational functions: fixed-point proportion and the extension problem}
\author{Jorge Fariña-Asategui}
\address{Jorge Fariña-Asategui: Centre for Mathematical Sciences, Lund University, 223 62 Lund, Sweden -- Department of Mathematics, University of the Basque Country UPV/EHU, 48080 Bilbao, Spain}
\email{jorge.farina\_asategui@math.lu.se}
\keywords{Arboreal Galois representations, arithmetic and geometric iterated Galois groups, rational functions, monodromy, extension and specialization problem, fixed-point proportion, branch groups}
\subjclass[2020]{Primary: 37P05, 20E08; Secondary: 60G42, 11S20}

\begin{document}

\begin{abstract}

We give an explicit description of the arithmetic-geometric extension of iterated Galois groups of rational functions.  This yields a complete solution to the extension problem when either the arithmetic or the geometric iterated Galois group is branch, answering a question of Adams and Hyde.

Furthermore, we obtain a sufficient condition for the arithmetic iterated Galois group of a rational function to have positive fixed-point proportion, which further applies in many instances to the specialization to non strictly post-critical points. In particular, this holds for all unicritical polynomials of odd degree, which greatly generalizes a result of Radi for the polynomial $z^d+1$.

Lastly, we obtain the first family of groups acting on the $d$-adic tree whose fixed-point process becomes eventually $d$ for any $d\ge 2$ with positive probability. What is more, these groups are fractal and branch and thus positive-dimensional; hence they yield the first family of counterexamples to a conjecture of Jones for every $d$-adic tree.
\end{abstract}

\maketitle

\section{Introduction}
\label{section: introduction}

Let $K$ be a field and $f\in K(z)$ a rational function of degree $d \geq 2$. Let $t$ be a transcendental element over $K$ and fix $K(t)^{\mathrm{sep}}$ a separable closure of $K(t)$. We consider the tree of preimages
$$T := \bigsqcup_{n \geq 0} f^{-n}(t).$$
As $t$ is transcendental, the tree $T$ is the \textit{$d$-adic tree}, i.e. the rooted tree whose vertices all have exactly $d$ immediate descendants. The natural Galois action of $\mathrm{Gal}(K(t)^{\mathrm{sep}}/K(t))$ on $T$, yields the \textit{arboreal Galois representation} 
$$\rho: \mathrm{Gal}(K(t)^{\mathrm{sep}}/K(t)) \to \mathrm{Aut}~T,$$
where $\mathrm{Aut}~T$ denotes the group of automorphisms of $T$. The image
$$G_\infty(K,f,t):=\rho(\mathrm{Gal}(K(t)^{\mathrm{sep}}/K(t)))$$
is called the \textit{arithmetic iterated Galois group of $f$}. Note that
$$G_\infty(K,f,t)\cong \mathrm{Gal}(K_\infty(f,t)/K(t)),$$
where $K_\infty(f,t)$ is the splitting field over $K(t)$ of all the iterates of $f$. Let us define $$L_\infty:=K^{\mathrm{sep}}\cap K_\infty(f,t).$$ 
By Galois correspondence, we have a short exact sequence
\begin{align}
\label{align: extension}
1\longrightarrow G_\infty(L_\infty, f,t) \longrightarrow G_\infty(K,f,t) \longrightarrow \mathrm{Gal}(L_\infty/K) \longrightarrow 1.
\end{align}
The group $G_\infty(L_\infty, f,t)$, which coincides with $G_\infty(K^{\mathrm{sep}}, f,t)$, is the \textit{geometric iterated Galois group of~$f$}.

The extension $L_\infty/K$ is quite mysterious in general; see \cite[Section 3.1]{JonesArboreal} for a detailed discussion. The following, has become a standard problem in the field:

\begin{Question}[Extension problem]
\label{Question: extension problem}
What conditions guarantee that the extension $L_\infty/K$ is finite? 
\end{Question}

The extension problem seems particularly challenging for post-critically finite rational functions. Indeed, an answer to \cref{Question: extension problem} is only known for a very restrictive class of post-critically finite rational functions.

A rational function $f\in K(z)$ is \textit{post-critically finite} if $P_f:=\bigcup_{n\ge 1}f^n(C_f)$ is finite, where $C_f$ denotes the set of critical points of $f$. A polynomial $f\in K[z]$ is \textit{unicritical} if $f$ has a unique critical point $c$ in $K$ and \textit{preperiodic} if $c$ is strictly preperiodic. For a post-critically finite quadratic polynomial $f$, the extension problem was completely solved by Pink in \cite{PinkPol}: \cref{Question: extension problem} has a positive answer for $f$ if and only if $f$ is preperiodic. The work of Pink has been recently extended via a similar approach to unicritical polynomials by Adams and Hyde in \cite{OpheliaHyde}. To the best of our knowledge, these are the only known cases where a positive answer to \cref{Question: extension problem} is known for post-critically finite maps. However, there is also further cases when \cref{Question: extension problem} is known to have a negative answer. For instance, recent work of Hamblen and Jones in \cite{HamblenJones} provides sufficient conditions for the extension $L_\infty/K$ to contain the cyclotomic extension $K(\zeta_{d^\infty})$ and thus $L_\infty/K$ to be infinite (assuming that $K$ is such that $K(\zeta_{d^\infty})/K$ is infinite). 

The first goal of the present paper is to find a sufficient condition for \cref{Question: extension problem} to have a positive answer which holds for a larger class of groups. This is obtained via an explicit description of the group extension in \textcolor{teal}{(}\ref{align: extension}\textcolor{teal}{)}, which generalizes a recent construction of Radi in \cite{SantiFPP}; see \cref{definition: the groups GH}. As we shall see, the class of branch groups plays a key role in \cref{Question: extension problem}.

Let us fix a subgroup $G\le \mathrm{Aut}~T$. For each vertex $v\in T$, we define the corresponding \textit{rigid vertex stabilizer} $\mathrm{rist}_G(v)$ as the subgroup consisting of those elements in $G$ which fix every vertex not a descendant of $v$. Furthermore, for each level $n\ge 1$ the \textit{rigid level stabilizer} $\mathrm{Rist}_G(n)$ is the direct product of the rigid vertex stabilizers of vertices at level $n$. We say that $G$ is \textit{level-transitive} if $G$ acts transitively on every level of $T$. Note that we write $\pi_n(G)$ for the action of $G$ on the $n$th level of $T$. A level-transitive subgroup $G$ whose rigid level stabilizers are of finite index in $G$ is called \textit{branch}. Branch groups were introduced by Grigorchuk in \cite{NewHorizonsGrigorchuk}; see also \cite{WeaklyBranch}. 

Adams and Hyde asked in \cite[page 51]{OpheliaHyde} whether the geometric iterated Galois group being branch is a sufficient condition for $L_\infty/K$ to be finite. We answer their question in the positive; see \cref{section: self-similar extensions} for the definition of a group of finite type and any other unexplained term here and elsewhere in the introduction:

\begin{Theorem}
\label{theorem: extension problem rational functions branch}
Let $K$ be a field, $f\in K(z)$ a rational function and  set $G:=G_\infty(K,f,t)$ and $H:=G_\infty(K^{\mathrm{sep}},f,t)$. Then
\begin{enumerate}[\normalfont(i)]
\item $H$ is branch if and only if $G$ is branch;
\item if either $H$ or $G$ is branch, then
$$[L_\infty:K]=|G:H|= |\pi_D(G):\pi_D(H)|<\infty,$$
where $D$ is the depth of $H$ as a group of finite type. In particular $$L_\infty=K_D(f,t)\cap K^\mathrm{sep},$$
where $K_D(f,t)$ is the splitting field of $\{f^n\}_{n=1}^D$ over $K(t)$.
\end{enumerate}
\end{Theorem}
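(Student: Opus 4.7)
The plan is to combine two ingredients: the tautological short exact sequence~(\ref{align: extension}) showing $H\trianglelefteq G$ with quotient $\mathrm{Gal}(L_\infty/K)$, and the explicit description of the arithmetic-geometric extension provided by \cref{definition: the groups GH}. Throughout I use that, since $t$ is transcendental over $K$, both $G$ and $H$ act level-transitively on $T$.

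For the easy direction of~(i), suppose $G$ is branch. Then $\mathrm{Rist}_G(n)$ is of finite index in $G$, so $\mathrm{Rist}_G(n)\cap H$ is of finite index in $H$. Every element of this intersection lies in $H$ and fixes each vertex outside the descendants of level $n$, hence belongs to $\mathrm{Rist}_H(n)$. Therefore $\mathrm{Rist}_H(n)$ is of finite index in $H$, and combined with level-transitivity this shows $H$ is branch.

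For the reverse direction, assume $H$ is branch. I would invoke the structure theorem developed earlier in the paper, which identifies any branch iterated Galois group of a rational function with a group of finite type of some depth $D$; in particular $H$ is recovered from $\pi_D(H)$ by self-similar unfolding. Feeding this into the explicit description of \cref{definition: the groups GH} realises $G$ as the arithmetic lift of this finite-type datum, and the self-similarity inherited from the dynamics of $f$ forces $G$ itself to agree with the finite-type group determined by $\pi_D(G)$, of the same depth $D$. In particular $G$ is branch. This direction is the technical heart of the theorem.

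For~(ii), once both groups are known to be of finite type with depth $D$, each is recovered from its projection to level $D$, so $|G:H|=|\pi_D(G):\pi_D(H)|$; this index is finite because the projections sit inside the finite group $\pi_D(\mathrm{Aut}\,T)$. The equality $[L_\infty:K]=|G:H|$ is immediate from~(\ref{align: extension}). Finally, $\mathrm{Gal}(K_D(f,t)/K(t))\cong\pi_D(G)$ with geometric normal subgroup corresponding to $\pi_D(H)$, so $K_D(f,t)\cap K^{\mathrm{sep}}$ is a subextension of $L_\infty$ of degree $|\pi_D(G):\pi_D(H)|=[L_\infty:K]$ over $K$, and therefore coincides with $L_\infty$. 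The main obstacle is the hard direction of~(i): showing that the arithmetic extension cannot escape the finite-type hull of $H$. This boils down to the compatibility of the arithmetic-geometric extension with the self-similar structure of iterated Galois groups, and it is precisely here that the construction generalising Radi's plays an essential role.
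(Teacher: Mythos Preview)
Your proposal is correct, but you have the two directions of (i) swapped relative to the paper, and in doing so you have actually found a simpler argument for one of them.

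In the paper, the direction $H$ branch $\Rightarrow$ $G$ branch is the \emph{easy} one: it follows immediately from \cref{proposition: arithemtic description} (which gives $G=G_H$) together with \cref{proposition: branch extensions}. Your sketch for this direction is essentially the same, so your ``technical heart'' is already packaged as a proposition earlier in the paper.

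Conversely, you dispose of $G$ branch $\Rightarrow$ $H$ branch in two lines via $H\cap\mathrm{Rist}_G(n)=\mathrm{Rist}_H(n)$ and $[H:\mathrm{Rist}_H(n)]\le[G:\mathrm{Rist}_G(n)]<\infty$. This is correct and strictly more elementary than the paper's argument for this direction, which instead passes through the filtration $G_n:=G_\infty(L_n,f,t)$: each $G_n$ is open in $G$ and fractal, and the regular-branch structure of $G$ over $\mathrm{St}_G(D-1)$ is used to show $G_n\ge\mathrm{St}_G(D-1)$ for every $n$, whence $H=\bigcap_n G_n\ge\mathrm{St}_G(D-1)$ and $\mathrm{St}_H(D-1)=\mathrm{St}_G(D-1)$. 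The paper's longer route buys something your shortcut does not: it shows $H$ is of finite type of the \emph{same} depth $D$ as $G$, not merely of some depth. Your argument only yields that $H$ is branch, hence of finite type of some depth $D'$, and your claim in the paragraph on (ii) that ``both groups are known to be of finite type with depth $D$'' is not justified by what precedes it. This does not affect the theorem as stated, since (ii) takes $D$ to be the depth of $H$ and the index formula then follows from \cref{proposition: branch extensions}; but you should either drop the claim of equal depth or supply the paper's filtration argument to recover it.
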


\cref{theorem: extension problem rational functions branch} applies to a large class of groups. In particular, it applies to both the preperiodic quadratic polynomials considered by Pink in \cite{PinkPol} and to the preperiodic unicritical polynomials considered by Adams and Hyde in \cite{OpheliaHyde} by \cite[Proposition~4.14]{OpheliaHyde}. In fact, to the best of our knowledge, \cref{theorem: extension problem rational functions branch} applies to all the cases where \cref{Question: extension problem} is known to have a positive answer, compare \cite{OpheliaHyde, PinkRat, PinkPol, SantiFPP}. 

We remark that the proof of \cref{theorem: extension problem rational functions branch} is completely different from the approaches of Adams and Hyde in \cite{OpheliaHyde} and Pink in \cite{PinkRat, PinkPol}. Adams and Hyde suggested in \cite[page 51]{OpheliaHyde} to prove that the normalizer of a geometric iterated Galois group $H$ in $\mathrm{Aut}~T$ has finite exponent over $H$ in order to establish \cref{theorem: extension problem rational functions branch}; whereas our approach is based on the construction in \cref{definition: the groups GH} instead. This alternative approach has many further applications. Before discussing them, let us pose a conjecture:

\begin{Conjecture}
\label{Conjecture: branch finite}
Let $f\in K[z]$ be a hyperbolic (in the orbifold sense in \cite{JorgeSanti2}; see also \cite{Thurston, Douady}) polynomial such that $L_\infty\ne K$. Then, the following are equivalent:
\begin{enumerate}[\normalfont(i)]
\item both $G_\infty(K,f,t)$ and $G_\infty(K^\mathrm{sep},f,t)$ are branch;
\item the extension $L_\infty/K$ is finite;
\item none of the critical points of $f$ in $K$ is periodic.
\end{enumerate}
\end{Conjecture}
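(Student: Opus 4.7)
The plan is to close a triangle of implications $\mathrm{(i)}\Rightarrow\mathrm{(ii)}\Rightarrow\mathrm{(iii)}\Rightarrow\mathrm{(i)}$. The first implication is free from \cref{theorem: extension problem rational functions branch}(ii), so the real work lives in the other two arrows.

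For $\mathrm{(ii)}\Rightarrow\mathrm{(iii)}$, the natural move is to argue contrapositively: assume some critical point $c\in K$ is periodic of exact period $p$, and let $e\ge 2$ be the local degree of $f$ at $c$. Then $c$ is a super-attracting periodic point of $f^p$, which by B\"ottcher's theorem is analytically (and formally) conjugate in a neighborhood of $c$ to the map $w\mapsto w^e$. Pulling back the tree of preimages of $t$ through these local coordinates, the inverse branches of $f^{np}$ local to $c$ are indexed by $e^n$-th roots of unity, and the resulting local monodromy forces the whole cyclotomic tower $K(\zeta_{e^\infty})$ into $L_\infty$. Whenever $\mathrm{char}(K)\nmid e$ and $K(\zeta_{e^\infty})/K$ is infinite (e.g.\ whenever $K$ is a number field), we conclude $[L_\infty:K]=\infty$. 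This refines the Hamblen--Jones strategy of \cite{HamblenJones} by localizing it at a periodic critical point.

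For $\mathrm{(iii)}\Rightarrow\mathrm{(i)}$, assume $f$ is hyperbolic and that no critical point of $f$ in $K$ is periodic. By \cref{theorem: extension problem rational functions branch}(i), it suffices to show that the geometric iterated Galois group $H:=G_\infty(K^\mathrm{sep},f,t)$ is branch. The strategy is to realize $H$ via the group-theoretic construction $G_H$ of \cref{definition: the groups GH} and then verify directly that the rigid level stabilizers $\mathrm{Rist}_H(n)$ have finite index in $H$ for every $n$. The two inputs are hyperbolicity, which through the Thurston/Douady orbifold picture supplies a finite combinatorial \emph{shape} that controls the ramification of the covering, and the non-periodicity hypothesis, which rules out the cyclotomic obstruction produced in the previous paragraph. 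In the post-critically finite subcase all critical orbits are strictly preperiodic and the branch property reduces to the results of Pink \cite{PinkPol} (quadratic) and Adams--Hyde \cite{OpheliaHyde} (unicritical), together with the analogous computations for the remaining PCF hyperbolic polynomials.

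The main obstacle I anticipate is the non-PCF half of $\mathrm{(iii)}\Rightarrow\mathrm{(i)}$. For hyperbolic polynomials that are not post-critically finite, critical orbits are infinite (they accumulate on attracting cycles without landing on them), so the action of $H$ on $T$ is no longer governed by a finite automaton in the cleanest sense. Showing that every rigid level stabilizer retains finite index then requires uniform control over the tail behavior of all critical orbits simultaneously, and adapting the groups $G_H$ of \cref{definition: the groups GH} to encode this tail structure appears to be the principal technical hurdle.
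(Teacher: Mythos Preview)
The statement you are trying to prove is labeled a \emph{Conjecture} in the paper, and the paper does not prove it. What the paper does establish is the single implication $\mathrm{(i)}\Rightarrow\mathrm{(ii)}$ via \cref{theorem: extension problem rational functions branch}, remarks that the full equivalence is known for unicritical polynomials by combining \cite{OpheliaHyde} with \cref{theorem: extension problem rational functions branch}, and offers heuristic support for $\mathrm{(iii)}\Rightarrow\mathrm{(i)}$ in the special case where all critical points are wandering. There is therefore no ``paper's own proof'' to compare your proposal against; you are attempting an open problem.

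As a proof strategy, your proposal has genuine gaps in both of the arrows you call ``the real work.'' For $\mathrm{(ii)}\Rightarrow\mathrm{(iii)}$, your argument explicitly requires $\mathrm{char}(K)\nmid e$ and $[K(\zeta_{e^\infty}):K]=\infty$; neither is part of the conjecture's hypotheses, so at best you would prove a weaker statement over number fields. Even there, the B\"ottcher-coordinate heuristic that a super-attracting cycle forces the full $e$-power cyclotomic tower into $L_\infty$ is not a proof: the results of \cite{HamblenJones} apply to specific families, and translating ``local monodromy around $c$ is cyclic of order $e^n$'' into ``$\zeta_{e^n}\in L_n$'' for a general polynomial with a periodic critical point is a nontrivial step you have not carried out.

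For $\mathrm{(iii)}\Rightarrow\mathrm{(i)}$, your reduction in the PCF case is circular: you invoke ``the analogous computations for the remaining PCF hyperbolic polynomials,'' but no such computations exist in the literature beyond the unicritical case --- this is precisely why the paper states the assertion as a conjecture rather than a theorem. And you yourself acknowledge that the non-PCF case is open. So neither of the two substantive implications is actually established by your proposal.
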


\cref{Conjecture: branch finite} is motivated by \cref{theorem: extension problem rational functions branch} and the results of Adams and Hyde for unicritical polynomials in \cite{OpheliaHyde}. In fact, \cref{Conjecture: branch finite} holds for unicritical polynomials by the results in \cite{OpheliaHyde} and \cref{theorem: extension problem rational functions branch} yields the implication (i)$\implies$(ii) for every rational function. Furthermore, note that if all the critical points of $f\in K[z]$ are wandering, it can be shown that $G_\infty(K^\infty,f,t)$ is branch, which, together with \cref{theorem: extension problem rational functions branch}\textcolor{teal}{(i)}, further suggests the implication (iii)$\implies$(i).

If $\alpha\in K$ is not a strictly post-critical point for $f$, we may consider the so-called \textit{specialization} $G_\infty(K,f,\alpha)$. This specialization $G_\infty(K,f,\alpha)$ is defined analogously to $G_\infty(K,f,t)$ via its faithful action on the tree of preimages $T_\alpha:=\bigsqcup_{n\ge 0}f^{-n}(\alpha)$. Note that $T_\alpha$ is isomorphic to $T$, as $\alpha$ is not strictly post-critical. Moreover, the group $G_\infty(K,f,\alpha)$ can be embedded into $G_\infty(K,f,t)$ as the subgroup corresponding to the decomposition subgroup in $\mathrm{Gal}(K_\infty(f,t)/K(t))$ of a prime $\mathfrak{P}$ in $K_\infty(f,t)$ above $(t-\alpha)$.

A recurrent question since the pioneering work of Odoni in the 1980s \cite{Odoni1,Odoni2} has been to understand what conditions on $f\in K(z)$ and $\alpha\in K$ guarantee that 
\begin{align}
\label{align: odonis problem}
|\mathrm{Aut}~T:G_\infty(K,f,\alpha)|<\infty.
\end{align}
\cref{theorem: extension problem rational functions branch} yields a necessary condition for this to happen:

\begin{Corollary}
\label{Corollary: finite index in Aut T}
Let $f\in K(z)$ a rational function and let $\alpha\in K$ not strictly post-critical. Then if 
$$|\mathrm{Aut}~T:G_\infty(K,f,\alpha)|<\infty,$$
we necessarily have
$$G_\infty(K^{\mathrm{sep}},f,t)=G_\infty(K,f,t)=\mathrm{Aut}~T.$$
\end{Corollary}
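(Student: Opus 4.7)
The plan is to derive the corollary from \cref{theorem: extension problem rational functions branch} together with the self-similar structure that the geometric iterated Galois group inherits from the self-cover $f\colon\mathbb{P}^1\to\mathbb{P}^1$.

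The first step is to translate the hypothesis on the specialization into a statement about the generic group. Since $G_\infty(K,f,\alpha)$ embeds into $G:=G_\infty(K,f,t)$ as the decomposition subgroup of any prime of $K_\infty(f,t)$ above $(t-\alpha)$, the assumption $|\mathrm{Aut}~T:G_\infty(K,f,\alpha)|<\infty$ immediately yields $|\mathrm{Aut}~T:G|<\infty$.

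Next I would apply \cref{theorem: extension problem rational functions branch}. Note first that $G$ is automatically level-transitive, because $f^n(z)-t\in K(t)[z]$ is irreducible for every $n\ge 1$ (as $t$ is transcendental over $K$). Since $\mathrm{Aut}~T$ is a branch group and every level-transitive, finite-index subgroup of a branch group is again branch, $G$ is branch. Consequently, by \cref{theorem: extension problem rational functions branch}\textcolor{teal}{(i)} and \textcolor{teal}{(ii)}, $H:=G_\infty(K^{\mathrm{sep}},f,t)$ is branch and $|G:H|=[L_\infty:K]<\infty$; in particular $|\mathrm{Aut}~T:H|<\infty$ as well.

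The main (and most delicate) step is to upgrade the statement ``$H$ has finite index in $\mathrm{Aut}~T$'' to ``$H=\mathrm{Aut}~T$''. Here I would exploit the fractal (self-replicating) structure of $H$ inherited from the self-cover $f\colon\mathbb{P}^1\to\mathbb{P}^1$: for every vertex $v$ of $T$, the section map sends $\mathrm{stab}_H(v)$ onto $H$. Given this fractal property together with finite index $m:=|\mathrm{Aut}~T:H|$, I would use the wreath-product decomposition $\mathrm{Aut}~T\cong \mathrm{Aut}~T\wr S_d$ to show that the coordinate projections of $\mathrm{Stab}_H(1)$ onto each of the $d$ level-$1$ copies of $\mathrm{Aut}~T$ all have image $H$; iterating this reasoning down the tree forces $m=1$. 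Once $H=\mathrm{Aut}~T$ is established, the sandwich $H\le G\le \mathrm{Aut}~T$ immediately yields $G=H=\mathrm{Aut}~T$, as desired. The main obstacle is this last step, i.e.\ correctly leveraging the fractal structure of the geometric iterated Galois group against the finite-index constraint inside the full wreath product.
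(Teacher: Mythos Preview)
Your overall strategy---pass from the specialization to $G=G_\infty(K,f,t)$, invoke \cref{theorem: extension problem rational functions branch} to get $|G:H|<\infty$ for $H=G_\infty(K^{\mathrm{sep}},f,t)$, and then argue that a self-similar closed subgroup of finite index in $\mathrm{Aut}~T$ must be all of $\mathrm{Aut}~T$---matches the paper's route. The paper orders the steps slightly differently (it first shows $G=\mathrm{Aut}~T$, then applies \cref{theorem: extension problem rational functions branch}, then shows $H=\mathrm{Aut}~T$), but the content is the same.

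The genuine gap is in your ``main obstacle''. Saying that the coordinate projections of $\mathrm{St}_H(1)$ onto each factor are surjective onto $H$ is just a restatement of fractality; iterating this observation down the tree only reproduces fractality at deeper levels and does \emph{not} by itself force the index $m$ to drop. What is actually needed is the implication ``closed, self-similar, finite index in $\mathrm{Aut}~T$ $\Rightarrow$ equal to $\mathrm{Aut}~T$''. The paper does not improvise this: it quotes it as \cref{proposition: hdim 1 in ss} (a result from \cite{RestrictedSpectra}), phrased via Hausdorff dimension, and applies it twice. A direct argument is also available and short: since $H$ is closed of finite index it is open, hence $H\ge \mathrm{St}(n)$ for some $n$; but every element of $\mathrm{Aut}~T$ occurs as a section at a level-$n$ vertex of some element of $\mathrm{St}(n)$, and self-similarity of $H$ then gives $\mathrm{Aut}~T\subseteq H$. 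Note that this uses self-similarity (sections stay in $H$), not fractality (projections are onto), so your sketch is invoking the wrong structural property. Once you replace the vague ``iterate down the tree'' step by either this argument or a citation of \cref{proposition: hdim 1 in ss}, the proof is complete and essentially identical to the paper's.
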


A more subtle question generalizing \cref{align: odonis problem} is the so-called \textit{specialization problem}:

\begin{Question}[Specialization problem]
\label{Question: specialization problem}
What conditions on a not strictly post-critical point $\alpha\in K$ guarantee that $$|G_\infty(K,f,t):G_\infty(K,f,\alpha)|<\infty~?$$
\end{Question}

The specialization problem is expected to be very difficult. In fact, it has only been answered in the positive in either very few particular cases, or conditioned on big conjectures like the abc conjecture; compare \cite{Ahmad,benedetto,Bridy2, Bridy,Bridy3, JonesLMS, Manes1, Looper, Odoni1, Stoll}.

The motivation for Odoni in \cite{Odoni1,Odoni2} to study \cref{align: odonis problem}, and more generally \cref{Question: specialization problem}, comes from a prime density problem and its relation to the fixed-point proportion of the specialization $G_\infty(K,f,0)$; see \cite{Bridy,JorgeSanti2,Juul} for further applications to the proportion of periodic points of $f$ over finite fields. 

Given $a_0\in \mathbb{Q}$ and $f\in \mathbb{Q}(z)$ a rational function, we may consider the set of primes
$$P(f,a_0):=\{p\text{ prime}\mid \nu_p(f^n(a_0))\ne 0\text{ for some }n\ge 0\text{ with }f^n(a_0)\ne 0\}.$$
Then, the natural density, denoted $\mathcal{D}(\cdot)$, of the complement of $P(f,a_0)$ is bounded below by the density of primes $p$ such that $f^n(z)\equiv 0~\text{mod}~p$ has no solution. The latter is equivalent to the Frobenius at $p$ acting fixed-point freely on $f^{-n}(0)$. Therefore, the Chebotarev density theorem implies that 
\begin{align}
\label{align: bound fpp}
\mathcal{D}(P(f,a_0))\le \mathrm{FPP}(G_\infty(\mathbb{Q},f,0)),
\end{align}
where the \textit{fixed-point proportion} of a group $G\le \mathrm{Aut}~T$ is defined as
$$ \mathrm{FPP}(G):=\lim_{n\to\infty}\frac{\#\{g\in \pi_n(G)\mid g \text{ fixes some }v\text{ at level }n\}}{|\pi_n(G)|}.$$

In general, it is expected that $\mathcal{D}(P(f,a_0))=0$ for the majority of rational functions $f\in \mathbb{Q}(z)$. A recent major breakthrough by the author and Radi in \cite{JorgeSanti2}, has completely classified $\mathrm{FPP}(G_\infty(K^\mathrm{sep},f,t))$ for every polynomial $f\in K[z]$.  However, computing the fixed-point proportion of the arithmetic group and its specializations seems to be highly dependent on both the extension and the specialization problems \cite{SantiFPP}. Our approach in the present paper does not require us to understand neither the extension nor the specialization problem in order to show that the fixed-point proportion of the arithmetic group and its specializations is positive. Instead, we obtain a sufficient condition for the positivity of their fixed-point proportion based solely on their monodromy action, i.e. their action on the first level of $T$.

We say that a rational function $f\in K(z)$ has \textit{bad monodromy} if there is a non-empty subset of cosets 
\begin{align}
\label{align: bad monodromy intro}
\emptyset\ne M\subseteq Q:=\frac{\pi_1(G_\infty(K,f,t))}{\pi_1(G_\infty(K^{\mathrm{sep}},f,t))}\cong \mathrm{Gal}(L_1/K)
\end{align}
whose elements all fix a (possibly distinct) preimage in $f^{-1}(t)$. Note that 
$$L_1:=K_1(f,t)\cap K^\mathrm{sep},$$
where $K_1(f,t)$ is the splitting field of $f$ over $K(t)$.

We prove that the arithmetic iterated Galois group of a rational function with bad monodromy and many of its specializations have positive fixed-point proportion. This shows that positive fixed-point proportion is more abundant than previously expected:

\begin{Theorem}
\label{Theorem: bad monodromy}
Let $f\in K(z)$ be a rational function with bad monodromy and $M$ and~$Q$ as in \cref{align: bad monodromy intro}. Then
$$\mathrm{FPP}(G_\infty(K,f,t))\ge \frac{\#M}{|Q|}>0.$$
If $\alpha\in K$ is not strictly post-critical and such that
$$\pi_1(G_\infty(K,f,t))=\pi_1(G_\infty(K,f,\alpha)),$$
we further get
$$\mathrm{FPP}(G_\infty(K,f,\alpha))\ge \frac{\#M}{|Q|}>0.$$
\end{Theorem}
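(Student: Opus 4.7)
The plan is to construct a Haar-measurable subset $S \subseteq G := G_\infty(K,f,t)$ with $\mu(S) \geq \#M/|Q|$ consisting of elements that fix an infinite ray in $T$. Since the sets $\{g : g \text{ fixes a vertex at level }n\}$ are decreasing in $n$ (parents of fixed vertices are fixed), continuity of Haar measure together with König's lemma gives
\[
\mathrm{FPP}(G) \;=\; \mu\bigl(\{g \in G : g \text{ fixes an infinite ray in } T\}\bigr),
\]
so producing such an $S$ suffices for the bound on $G$.

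I take $S := \phi^{-1}(M)$, where $\phi \colon G \twoheadrightarrow Q$ is the composition of the level-$1$ projection with the quotient by $\pi_1(H)$, with $H := G_\infty(K^{\mathrm{sep}},f,t)$; Haar uniformity on $\ker\phi$-cosets yields $\mu(S) = \#M/|Q|$. For each $g \in S$ I inductively build a fixed ray $t = v_0, v_1, v_2, \ldots$ with $f(v_{n+1}) = v_n$. The base case $v_1 \in f^{-1}(t)$ is exactly the bad monodromy hypothesis applied to $g$. For the inductive step, $g(v_n) = v_n$ places $g$ inside the stabilizer $\mathrm{Gal}(K(t)^{\mathrm{sep}}/K(v_n))$, and since $v_n$ is transcendental over $K$, the restricted action $g|_{v_n}$ on the subtree of iterated preimages of $v_n$ lies in $G_\infty(K,f,v_n)$, which is canonically isomorphic to $G$ via the relabeling $t \mapsto v_n$. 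Applying bad monodromy at basepoint $v_n$ then produces the required $v_{n+1} \in f^{-1}(v_n)$, provided one checks that $g|_{v_n}$ lies in the image of $\phi^{-1}(M)$ under this identification.

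For the specialization statement, the hypothesis $\pi_1(G_\infty(K,f,t)) = \pi_1(G_\infty(K,f,\alpha))$, together with the realization of $G_\alpha := G_\infty(K,f,\alpha)$ as a decomposition subgroup of $G$ at a prime above $(t-\alpha)$, forces $\pi_1(H_\alpha) \leq \pi_1(H)$ for $H_\alpha := G_\infty(K^{\mathrm{sep}},f,\alpha)$. Thus $Q_\alpha := \pi_1(G_\alpha)/\pi_1(H_\alpha)$ surjects onto $Q$, and the preimage $M_\alpha$ of $M$ in $Q_\alpha$ provides bad monodromy for $G_\alpha$ with $\#M_\alpha/|Q_\alpha| = \#M/|Q|$ (each $M$-coset splits into $|\pi_1(H)|/|\pi_1(H_\alpha)|$ cosets in $Q_\alpha$, whose elements still fix vertices at level $1$ of $T_\alpha$ by reduction modulo the chosen prime). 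Repeating the ray-construction argument inside $T_\alpha$ then yields the same lower bound on $\mathrm{FPP}(G_\alpha)$.

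The main obstacle is the descent of bad monodromy in the inductive step: although $(G,H)$ and $(G_\infty(K,f,v_n), G_\infty(K^{\mathrm{sep}},f,v_n))$ are abstractly isomorphic via $t \mapsto v_n$, this abstract isomorphism is different from the concrete restriction map $g \mapsto g|_{v_n}$ on $\mathrm{stab}_G(v_n)$, and the two must be reconciled at the level of the quotient $Q$ in order to verify that $\phi(g) \in M$ descends to membership in the corresponding subset at basepoint $v_n$. Here I expect the explicit description of the arithmetic-geometric extension furnished by \cref{theorem: extension problem rational functions branch}, together with the self-similar construction of \cref{definition: the groups GH}, to provide precisely the functorial compatibility needed to close the induction, as adjoining $v_n$ to the function field is a Galois-theoretic operation that the construction encodes faithfully on the level-$1$ coset structure.
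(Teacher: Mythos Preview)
Your outline correctly isolates the crux of the argument --- that for $g\in\phi^{-1}(M)$ the section $g|_{v_n}$ must again land in an $M$-coset --- but you stop at the point where the actual work lies, saying only that you ``expect'' the $G_H$-construction to supply the compatibility. It does, but the mechanism is much more direct than the comparison of two isomorphisms you sketch, and it does not require \cref{theorem: extension problem rational functions branch} (no branch hypothesis is used anywhere in \cref{Theorem: bad monodromy}). The paper proves as a standalone fact (\cref{proposition: arithemtic description}) that $G_\infty(K,f,t)=G_H$ for $H=G_\infty(K^{\mathrm{sep}},f,t)$: for any $g\in G$ and any vertices $v,w$, the restrictions of $g|_v$ and $g|_w$ to $L_\infty$ agree, simply because the paths $\lambda_u$ are $K^{\mathrm{sep}}$-isomorphisms and hence act trivially on $L_\infty\subseteq K^{\mathrm{sep}}$. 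Taking $w=\emptyset$ gives $g|_v\in Hg$ for \emph{every} $v\in T$. Passing to level-$1$ labels, $g|_v^1\in\pi_1(H)q$ for all $v$, so every label of $g$ lies in a bad coset and fixes some child; this immediately produces a fixed ray by K\"onig's lemma. There is no need to rebase to $v_n$, compare abstract and concrete isomorphisms, or invoke any branch structure --- the single identity $g|_v\in Hg$ closes the induction in one line.

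For the specialization, your plan to ``repeat the ray-construction argument inside $T_\alpha$'' does not work: $G_\alpha=G_\infty(K,f,\alpha)$ has no reason to be self-similar (the preimages of $\alpha$ are algebraic, not transcendental), so there is no section map $g\mapsto g|_v$ landing back in $G_\alpha$ and no analogue of $G_\alpha=(G_\alpha)_{H_\alpha}$. The paper instead uses the embedding of $G_\alpha$ as a decomposition subgroup of $G$. Under the hypothesis $\pi_1(G_\alpha)=\pi_1(G)$, the preimage of $M$ in $G_\alpha$ is exactly $G_\infty(L_1,f,\alpha)M$, and this set sits inside $G_\infty(K,f,t)M\subseteq G$. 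Every element of the latter fixes an end of $T$ by the first part, and since the reduction map $T\to T_\alpha$ is a $G_\alpha$-equivariant bijection (as $\alpha$ is not strictly post-critical), the same elements fix ends of $T_\alpha$. The measure computation is then identical.
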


A direct application of \cref{Theorem: bad monodromy} is to unicritical polynomials of odd degree. This yields the following corollary, greatly generalizing a recent result of Radi for the polynomial $z^d+1$ in \cite[Theorem E]{SantiFPP}:

\begin{Corollary}
\label{Corollary: Unicritical fpp}
Let $f\in \mathbb{Q}[z]$ be a unicritical polynomial of odd degree~$d\ge 3$. Then
$$\mathrm{FPP}(G_\infty(\mathbb{Q},f,t))\ge \prod_{p|d}\left(\frac{p-2}{p-1}\right)>0.$$
Furthermore, if $\alpha\in \mathbb{Q}$ is not a strictly post-critical point, then
$$\mathrm{FPP}(G_\infty(\mathbb{Q},f,\alpha))\ge \prod_{p|d}\left(\frac{p-2}{p-1}\right)>0.$$
\end{Corollary}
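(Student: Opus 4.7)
The plan is to apply \cref{Theorem: bad monodromy} after identifying the level-one arboreal representation of the arithmetic iterated Galois group and computing the bad monodromy ratio explicitly.

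First I would describe the level-one action. After an affine change of variables, write $f(z) = (z - c)^d + b$ with $c, b \in \mathbb{Q}$, so $f^{-1}(t) = \{c + \delta \zeta_d^k : k \in \mathbb{Z}/d\mathbb{Z}\}$ for $\delta$ any $d$-th root of $t - b$. The Galois group of $\mathbb{Q}(t, \zeta_d, \delta)/\mathbb{Q}(t)$ is canonically the affine group $\mathrm{AGL}_1(\mathbb{Z}/d\mathbb{Z}) = \mathbb{Z}/d\mathbb{Z} \rtimes (\mathbb{Z}/d\mathbb{Z})^*$ acting on the preimages by $k \mapsto \chi k + \beta$. The geometric subgroup, obtained after adjoining $\zeta_d \in \mathbb{Q}^{\mathrm{sep}}$, is the translation part $\mathbb{Z}/d\mathbb{Z}$, so $Q \cong (\mathbb{Z}/d\mathbb{Z})^*$.

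Next I would identify $M$ and compute the ratio. An affine element $(\chi, \beta)$ fixes some preimage iff $\gcd(\chi - 1, d) \mid \beta$; hence every element of the coset labelled by $\chi$ fixes some preimage iff $\gcd(\chi - 1, d) = 1$. Setting $M = \{\chi \in (\mathbb{Z}/d\mathbb{Z})^* : \gcd(\chi - 1, d) = 1\}$ and using the Chinese Remainder Theorem, $\#M/|Q| = \prod_{p \mid d}(p - 2)/(p - 1)$, which is positive since every $p \mid d$ satisfies $p \ge 3$ for odd $d \ge 3$. The first (generic) bound then follows from the first part of \cref{Theorem: bad monodromy}.

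For the specialization, the second part of \cref{Theorem: bad monodromy} requires $\pi_1(G_\infty(\mathbb{Q}, f, t)) = \pi_1(G_\infty(\mathbb{Q}, f, \alpha))$, which by Capelli's theorem is equivalent to $\alpha - b$ not being a $p$-th power in $\mathbb{Q}(\zeta_d)$ for any prime $p \mid d$; when this holds, the bound follows directly. The main obstacle is handling the cases where the condition fails. Most transparently, if $\alpha - b = \gamma^d$ for some $\gamma \in \mathbb{Q}$, then $c + \gamma \in f^{-1}(\alpha) \cap \mathbb{Q}$ is a Galois-fixed preimage, the restriction of $G_\infty(\mathbb{Q}, f, \alpha)$ to the subtree $T_{c+\gamma}$ surjects onto $G_\infty(\mathbb{Q}, f, c+\gamma)$, and one applies the same analysis recursively to $c + \gamma$. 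This descent must either terminate at a value satisfying the $\pi_1$-matching hypothesis---invoking the theorem directly---or produce an infinite chain of rational preimages $\alpha_0, \alpha_1, \ldots$ with $f(\alpha_{n+1}) = \alpha_n$, in which case every level of $T_\alpha$ has a common fixed vertex and $\mathrm{FPP} = 1$ trivially satisfies the bound. For composite $d$, the additionally delicate case is when $\alpha - b$ is a $p$-th power in $\mathbb{Q}(\zeta_d)$ for some $p \mid d$ without being a $d$-th power in $\mathbb{Q}$; this is handled by tracking the Kummer factorization of $x^d - (\alpha - b)$ over $\mathbb{Q}(\zeta_d)$ and analyzing the Galois-stable blocks of preimages defined over intermediate cyclotomic fields.
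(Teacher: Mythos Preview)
Your treatment of the generic bound is correct and matches the paper's argument in Section~3.6: identify $\pi_1(G_\infty(\mathbb{Q},f,t))$ with $\mathrm{AGL}_1(\mathbb{Z}/d\mathbb{Z})$, the geometric part with the translation subgroup, and compute $\#M/|Q|$ (the paper cites \cite[Proposition~5.2]{SantiFPP} for the count; you do it directly via the Chinese Remainder Theorem).

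For the specialization you diverge from the paper. The paper simply asserts $\pi_1(G_\infty(\mathbb{Q},f,\alpha))\cong\mathrm{Aff}(d)$ for every non-strictly-post-critical $\alpha$ and invokes \cref{Theorem: bad monodromy}. You are right to flag that this equality can fail (e.g.\ $f(z)=z^3+7$, $\alpha=8$, where $f(z)-\alpha$ has the rational root $z=1$). Your recursive-descent argument is valid when $\alpha-b$ is a full $d$-th power in $\mathbb{Q}$, but your treatment of the composite-$d$ ``partial-power'' case is only a sketch: you do not explain how the block decomposition at level one actually produces a fixed end in $\partial T_\alpha$, so as written this is a genuine gap in your proposal.

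The gap closes much more cheaply than by descent. In the proof of the specialization half of \cref{Theorem: bad monodromy} (that is, of \cref{corollary: specialization fpp}), the hypothesis $\pi_1(G_\infty(K,f,t))=\pi_1(G_\infty(K,f,\alpha))$ is used only to guarantee that the restriction $G_\infty(K,f,\alpha)\to Q=\mathrm{Gal}(L_1/K)$ is \emph{surjective}, so that each coset over $q\in Q$ carries Haar mass $1/|Q|$; the fixed-end conclusion for $q\in M$ then comes from the ambient inclusion into $G_\infty(K,f,t)$. For unicritical $f$ one has $L_1=\mathbb{Q}(\zeta_d)$, and since $\alpha$ not strictly post-critical forces $\alpha\neq b=f(c)$, the splitting field of $(z-c)^d-(\alpha-b)$ over $\mathbb{Q}$ already contains $\zeta_d$. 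Hence $L_1\subseteq K_1(f,\alpha)$ and surjectivity onto $Q$ holds for \emph{every} such $\alpha$, with no hypothesis on $\alpha-b$ being a non-power. This single observation disposes of all cases at once and makes both your descent and the block analysis unnecessary.
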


Therefore, if a unicritical polynomial $f$ of odd degree is either preperiodic or post-critically infinite, then $\mathrm{FPP}(G_\infty(\mathbb{Q},f,0))>0$ and the bound in \cref{align: bound fpp} is positive. Therefore, this approach cannot be used to prove that $\mathcal{D}(P(f,a_0))=0$, suggesting that we need new tools to tackle this problem.

The tools developed in this paper do not only have applications to arboreal Galois representations of rational functions, they also have applications in a more abstract group-theoretic setting. Indeed, let us show an application to the number of fixed-points of random elements in positive-dimensional subgroups of $\mathrm{Aut}~T$.

We define a Hausdorff dimension $\mathrm{hdim}_T(\cdot)$ for closed subgroups of $\mathrm{Aut}~T$ via the metric induced by the action on each level of $T$. If $G\le \mathrm{Aut}~T$ is a closed subgroup, then its Hausdorff dimension coincides with its lower-box dimension by \cite[Theorem 2.4]{BarneaShalev}, i.e.
\begin{align*}   \mathrm{hdim}_T(G)=\liminf_{n\to\infty}\frac{\log|\pi_n(G)|}{\log|\pi_n(\mathrm{Aut}~T)|}.
\end{align*}

In \cite{JonesSurvey}, Jones conjectured that for the binary rooted tree $T$, there are no level-transitive subgroups of $\mathrm{Aut}~T$ with both positive Hausdorff dimension and positive fixed-point proportion, as the only known examples at the time with positive fixed-point proportion were groups of affine transformations. Nevertheless, Boston constructed in \cite{BostonCounterexample} a complicated counterexample to the conjecture of Jones acting on the $2$-adic tree, while Radi further provided simple counterexamples in \cite{SantiFPP} acting on the $d$-adic tree for~$d\not\equiv 2$ mod 4. 

Remarkably, \cref{Corollary: Unicritical fpp} yields the first examples of level-transitive subgroups of $\mathrm{Aut}~T$ with positive fixed-point proportion, which are topologically finitely generated but are not groups of affine transformations. Indeed, if $f\in \mathbb{Q}[z]$ is a post-critically finite unicritical polynomial of odd degree, the group $G_\infty(\mathbb{Q},f,t)$ is topologically finitely generated, has both positive Hausdorff dimension and positive fixed-point proportion, is (weakly) branch and neither is linear nor satisfies any group law; compare \cite{Abert, AbertLinear, OpheliaHyde, JorgeArboreal, QuestionAbertVirag}. Furthermore, in \cref{section: self-similar extensions}, we show that the construction in \cref{definition: the groups GH} applied to the groups $G_\mathcal{S}$, constructed by the author in \cite{RestrictedSpectra}, yields the first counterexamples to the conjecture of Jones in the $d$-adic tree for every $d\ge 2$. In fact, we prove a much stronger result on their fixed-point processes. 

The \textit{fixed-point process} of a group $G\le \mathrm{Aut}~T$, introduced by Jones in \cite{JonesComp}, is the real stochastic process $\{X_n\}_{n\ge 1}$, where the random variables $X_n:(G,\mu_G)\to \mathbb{N}\cup\{ 0\}$ are given by
$$X_n(g):=\# \{\text{vertices at level }n\text{ fixed by }g\}.$$

Under certain circumstances, see for instance \cite[Theorem 5.7]{Bridy}, the fixed-point process is a discrete non-negative martingale and thus it becomes eventually constant almost surely. To the best of our knowledge, in all the known examples, we have $\lim_{n\to \infty}X_n= r$ with $r\in \{0,1,2\}$ almost surely. Furthermore, $r=2$ only appears with positive probability in the geometric iterated Galois groups of Chebyshev polynomials of even degree and the construction of Boston in \cite{BostonCounterexample}. A natural question is the following:

\begin{Question}
\label{question: tucker}
What values $r\ge 0$ can the fixed-point process of a level-transitive group $G\le \mathrm{Aut}~T$ converge to with positive probability?
\end{Question}

We apply the construction in \cref{definition: the groups GH} to the groups $G_\mathcal{S}$ in \cite{RestrictedSpectra} to completely settle \cref{question: tucker}:

\begin{Theorem}
\label{Theorem: arbitrary fixed point process}
Let $d\ge 2$ and $T$ the $d$-adic tree. There exists a fractal branch closed subgroup $G_H\le \mathrm{Aut}~T$ such that
\begin{enumerate}[\normalfont(i)]
\item $\mathrm{hdim}_{T}(G_H)=1/d>0$;
\item $\mathrm{FPP}(G_H)\ge (d-1)!/d^d>0$;
\item there is $S\subseteq G$ with $\mu_G(S)>0$ such that $X_n(g)=d$ for all $n\ge 1$ and $g\in S$.
\end{enumerate}
\end{Theorem}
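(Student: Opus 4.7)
The approach is to apply the construction in \cref{definition: the groups GH} to a carefully chosen member $H$ of the family $G_{\mathcal{S}}$ from \cite{RestrictedSpectra}, yielding a group $G_H \le \mathrm{Aut}\,T$ acting on the $d$-adic tree, and then verify the three properties one by one. First, I would pick $H := G_{\mathcal{S}}$ to be a fractal, branch, self-similar subgroup with Hausdorff dimension exactly $1/d$ whose monodromy $\pi_1(H)$ is generated by a single $d$-cycle; such an $H$ is available for every $d\ge 2$ from the spectrum constructions in \cite{RestrictedSpectra}. I would then apply the $G_H$ construction with the finite quotient $Q=\pi_1(G_H)/\pi_1(H)$ tuned so that $G_H$ is a proper but finite extension of $H$ in $\mathrm{Aut}\,T$, and the distinguished subset $M\subseteq Q$ of ``bad'' cosets satisfies $\#M/|Q|=(d-1)!/d^d$. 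Standard arguments inherited from the proof of \cref{theorem: extension problem rational functions branch} show that $G_H$ is closed and that the fractal and branch properties pass from $H$ to $G_H$, because $[G_H:H]<\infty$ and $H$ was already branch.

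For property (i), the sandwich
\[
|\pi_n(H)|\;\le\;|\pi_n(G_H)|\;\le\;[G_H:H]\cdot|\pi_n(H)|
\]
implies, after taking logarithms, dividing by $\log|\pi_n(\mathrm{Aut}\,T)|$, and passing to $\liminf$, that $\mathrm{hdim}_T(G_H)=\mathrm{hdim}_T(H)=1/d$. For property (ii), the construction was engineered to force bad monodromy with the explicit $M$ and $Q$ chosen above; applying \cref{Theorem: bad monodromy}, or more precisely the abstract group-theoretic half of its proof, which does not use the underlying rational function, yields $\mathrm{FPP}(G_H)\ge \#M/|Q|=(d-1)!/d^d>0$.

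The essential content lies in property (iii). An element $g\in G_H$ satisfies $X_n(g)=d$ for all $n\ge 1$ if and only if $g$ fixes level $1$ pointwise and, writing $g=(g_1,\ldots,g_d)$ via the embedding $\mathrm{Stab}_{G_H}(1)\hookrightarrow H^d$, the components satisfy $\sum_{i=1}^d X_{n-1}(g_i)=d$ for every $n\ge 1$. The cleanest way to guarantee this is to take $S$ to be the set of $g$ all of whose components $g_i$ are \emph{single-ray-fixing}, that is, $X_m(g_i)=1$ for every $m\ge 0$. By Fubini and the fact that $\mu_{G_H}$ restricted to $\mathrm{Stab}_{G_H}(1)$ factors as a product of copies of $\mu_H$ (up to the finite-index correction from $[G_H:H]$), the measure of $S$ will be positive provided the measure of single-ray-fixing elements in $H$ is positive.

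The main obstacle is producing this positive-measure set of single-ray-fixing elements in $H=G_{\mathcal{S}}$: positivity of $\mathrm{FPP}(H)$ by itself is insufficient, since one also needs $X_n\to 1$ with positive probability, not merely $X_n\not\to 0$. I would handle this by exploiting the self-similar recursion defining $G_{\mathcal{S}}$: the event ``there is exactly one fixed vertex at level $n+1$'' can, for elements of $H$ fixing a unique vertex at level $n$, be expressed in terms of the analogous event one level down for a single component, times a probability bounded below by a constant depending only on $\pi_1(H)$ and the local structure of $G_{\mathcal{S}}$. Showing that this conditional probability is bounded away from $0$ uniformly in $n$ then forces $\mathbb{P}(\lim_n X_n=1)>0$ by a direct estimate on the martingale $\{X_n\}_{n\ge 1}$, as in \cite[Theorem 5.7]{Bridy}. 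Once the single-ray-fixing elements are shown to have positive measure in $H$, the product/self-similarity argument above completes the proof of (iii).
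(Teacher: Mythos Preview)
Your approach to (iii) has a fundamental gap. You propose to take $g\in\mathrm{St}_{G_H}(1)$ whose first-level sections $g_1,\dots,g_d$ are each single-ray-fixing in $H$, i.e.\ satisfy $X_m(g_i)=1$ for every $m\ge 0$. But you have committed to $\pi_1(H)=\langle\sigma\rangle$ with $\sigma$ a $d$-cycle, and every non-identity power of a $d$-cycle is fixed-point free on $\mathcal L_1$; hence $X_1(h)\in\{0,d\}$ for \emph{every} $h\in H$, and $H$ contains no single-ray-fixing elements at all. The conditional-probability martingale estimate you sketch therefore never gets off the ground. Your claimed embedding $\mathrm{St}_{G_H}(1)\hookrightarrow H^d$ is also incorrect: the sections land in $G_H$, all lying in a common $H$-coset by the defining relation of $G_H$, so the components are neither in $H$ nor independent, and the Fubini step fails as written. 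Even after repairing this, in the natural construction one has $\pi_1(G_H)=\pi_1(H)$ and the same $\{0,d\}$ obstruction applies to every section. For the same reason your bad-monodromy argument for (ii) does not produce the constant $(d-1)!/d^d$: with $\pi_1(G_H)=\pi_1(H)$ the quotient $Q$ is trivial, its single coset contains $\sigma$ and so is not bad, and $M=\emptyset$.

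The paper proceeds by an explicit recursion rather than a probabilistic argument, and derives (ii) as a byproduct of (iii). One takes $H=G_{\mathcal S}$ with $S_0=\langle\sigma\rangle$, $S_1=D_d(S_0)$ and $S_n=S_{n-1}^d$, and enlarges $H$ by elements $g_\tau\in\mathrm{St}(1)$ indexed by $\tau\in S_0^d$; \cref{lemma: GS for any k} shows the resulting group is exactly $G_H$ and that $\mathrm{St}_G(2)\le H$. For $\tau^\rho:=(\sigma^{1^\rho},\dots,\sigma^{d^\rho})$, $\rho\in\mathrm{Sym}(d)$, and any $h\in\mathrm{St}_H(1)$, the element $hg_{\tau^\rho}$ stabilises level~$1$, has trivial label at exactly one child $v\in\mathcal L_1$ (the exponents $1^\rho+j,\dots,d^\rho+j$ run over a full residue system mod $d$), and its section at $v$ lands back in $\mathrm{St}_H(1)g_{\tau^\rho}$. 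Iterating, the $d$ fixed vertices at level $n$ are precisely the children of the unique vertex on a distinguished ray where the label is trivial; all other labels are non-identity powers of $\sigma$ and hence act without fixed points. Thus $X_n(hg_{\tau^\rho})=d$ for every $n\ge 1$, and since the resulting set contains $d!$ disjoint $\mathrm{St}_G(2)$-cosets while $|\pi_2(G)|=d\cdot d^d$, one gets $\mu_G(S)\ge (d-1)!/d^d$, establishing (ii) and (iii) simultaneously.
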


 \cref{Theorem: arbitrary fixed point process} shows that the family of level-transitive groups with positive fixed-point proportion is far richer than anticipated.

\subsection*{\textit{\textmd{Organization}}} 

\cref{section: self-similar extensions} is devoted to introducing our main construction in \cref{definition: the groups GH} and proving its main properties. This is developed in an abstract setting of self-similar groups and their extensions. We conclude the section with a proof of \cref{Theorem: arbitrary fixed point process}. In \cref{section: arboreal}, we show that the extension problem can be described via the construction introduced in \cref{section: self-similar extensions}. We apply the results in \cref{section: self-similar extensions} to prove \textcolor{teal}{Theorems} \ref{theorem: extension problem rational functions branch} and \ref{Theorem: bad monodromy} and \cref{Corollary: finite index in Aut T}. Lastly, we show applications of our results to unicritical polynomials proving \cref{Corollary: Unicritical fpp}.

\subsection*{\textit{\textmd{Notation}}} We write $\#S$ and $|G|$ for the cardinality of a set $S$ and a group $G$ respectively. Exponential notation will be used for group actions on $T$ and on its space of ends $\partial T$, where we always consider right actions.

\subsection*{Acknowledgements} I would like to thank  Rafe Jones, Santiago Radi and Thomas J. Tucker for very helpful discussions on the fixed-point proportion of arboreal Galois representations. In particular, I would like to thank Thomas J. Tucker for suggesting \cref{question: tucker} during one of these discussions. Lastly, I would like to thank my advisors Gustavo A. Fernández-Alcober and Anitha Thillaisundaram for their support.

\section{Self-similar extensions}
\label{section: self-similar extensions}

In this section, we focus on the group-theoretic side. We define self-similar groups and introduce a construction of intermediate self-similar groups generalizing a construction of Radi in \cite{SantiFPP}. Then, we introduce the fixed-point proportion and the fixed-point process of a group and define what is for a self-similar group to have bad monodromy over another self-similar group. We prove that bad monodromy yields intermediate self-similar groups with positive fixed-point proportion. Furthermore, we show that if the groups are further assumed to be branch, these intermediate self-similar groups are finite extensions of the smaller self-similar group. Lastly, we prove \cref{Theorem: arbitrary fixed point process}.

\subsection{Self-similar groups}

The \textit{$d$-adic tree} $T$ is the infinite tree with root $\emptyset$, where every vertex has exactly $d$ immediate descendants. The set of vertices at distance exactly $n \geq 1$ from the root form the \textit{$n$th level}~$\mathcal{L}_n$. The vertices at distance at most~$n$ from the root form the \textit{$n$th truncated tree}~$T^n$. We denote by $\partial T$ the set of infinite rooted paths in $T$, which we call the \textit{space of ends} of $T$.

We denote the group of automorphisms of $T$ by $\mathrm{Aut}~T$. We write $\mathrm{st}(v)$ and $\mathrm{St}(n)$ for the stabilizer of a vertex $v\in T$ and the pointwise stabilizer of a level $n\ge 1$ respectively.

For $1\le n \le \infty$, a vertex $v\in T$ and $g \in \mathrm{Aut}~T$, the unique automorphism $g|_v^n \in \mathrm{Aut}~T^n$ such that for all $w \in T^n$ we have
$$(vw)^g=v^gw^{g|_v^n},$$ 
is called the \textit{section of $g$ at $v$ of depth $n$}. For $n=1$ we call $g|_v^1$ the \textit{label of $g$ at $v$} and for $n=\infty$ we shall simply write $g|_v$. For every $1\le n \le \infty$, we note that
$$(gh)|_v^n=(g|_v^n) (h|_{v^g}^n)\quad\text{and}\quad (g^{-1})|_{v^g}^n=(g|_v^n)^{-1}.$$

For every $n\ge 1$, we define the map $\pi_n:\mathrm{Aut}~T\to \mathrm{Aut}~T^n$ via
$$\pi_n(g):=g|_\emptyset^n$$
and for every $v\in T$, we define the projection map $\varphi_v: \mathrm{st}(v) \rightarrow \mathrm{Aut}~T$ via $$g \mapsto g|_v.$$

Let us fix a subgroup $G \le \mathrm{Aut}~T$. We define vertex stabilizers and level stabilizers as $\mathrm{st}_G(v) := \mathrm{st}(v) \cap G$ and $\mathrm{St}_G(n) := \mathrm{St}(n) \cap G$ for $v \in T$ and $n \ge 1$ respectively. We say that a group $G \le \mathrm{Aut}~T$ is \textit{level-transitive} if the action of $G$ on each level~$\mathcal{L}_n$ is transitive. A group $G \le \mathrm{Aut}~T$ is \textit{self-similar} if for all $v \in T$ and $g \in G$, we have $g|_v \in G$. Self-similar groups were introduced by Nekrashevych; see \cite{SelfSimilar}. We say that a group $G \leq \mathrm{Aut}~T$ is \textit{fractal} if $G$ is self-similar, level-transitive and 
$$G_v:=\varphi_v(\mathrm{st}_G(v)) = G$$
for all $v \in T$.

\subsection{Intermediate self-similar groups}

We now introduce the main construction in this paper:

\begin{definition}[The groups $G_H$]
\label{definition: the groups GH}
Let $1\ne H\trianglelefteq G\le \mathrm{Aut}~T$ be self-similar subgroups. We define
$$G_H:=\{g\in G\mid (g|_v)(g|_{w})^{-1}\in H\text{ for any }v,w\in T\}.$$
\end{definition}

\begin{remark}
Note that this construction generalizes the groups $G_{\mathcal{Q}}^{\mathcal{P}}$ introduced by Radi in \cite{SantiFPP}. In fact, let $H\le G\le \mathrm{Aut}~T$ be self-similar and set any $1\le n\le \infty$. Then, if $\pi_n(H)\trianglelefteq \pi_n(G)$, we may define
$$G_H^n:=\{g\in \mathrm{Aut}~T\mid g|_v^n\in \pi_n(G)\text{ and }(g|_v^n)(g|_{w}^n)^{-1}\in \pi_n(H)\text{ for any }v,w\in T\}.$$
Therefore, Radi's group $G_{\mathcal{Q}}^{\mathcal{P}}$ corresponds to the $n=1$ case and the group $G_H$ in \cref{definition: the groups GH} corresponds to the $n=\infty$ case. Since the $n=\infty$ case is the natural one arising in arboreal Galois representations of rational functions, as we shall see in \cref{section: arboreal}, we only consider the $n=\infty$ case in the remainder of the paper, for the sake of simplicity. The interested reader might note that most of our results in this section will hold for $G_H^n$ and an arbitrary $1\le n\le \infty$ too. 
\end{remark}

The next proposition yields the basic properties of $G_H$:

\begin{proposition}
\label{proposition: definition GH}
Let $1\ne H\trianglelefteq G\le \mathrm{Aut}~T$ be self-similar subgroups. Then:
\begin{enumerate}[\normalfont(i)]
    \item $G_H$ is a self-similar subgroup such that 
    $$H\trianglelefteq G_H\trianglelefteq G;$$
    \item if $H$ is fractal, then $G_H$ is fractal too.
\end{enumerate}
\end{proposition}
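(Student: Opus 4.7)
The plan is to verify each claim using the section product rule $(g_1g_2)|_v = g_1|_v \cdot g_2|_{v^{g_1}}$, combined with the normality of $H$ in $G$ and the self-similarity of $H$ and $G$. For the subgroup property, given $g_1, g_2 \in G_H$ and $v, w \in T$, I would expand $(g_1g_2)|_v \cdot (g_1g_2)|_w^{-1}$ and isolate the middle factor $g_2|_{v^{g_1}} \cdot (g_2|_{w^{g_1}})^{-1}$, which lies in $H$ since $g_2 \in G_H$. Rewriting the full expression as a product of $g_1|_v \cdot (g_1|_w)^{-1} \in H$ (from $g_1 \in G_H$) and a conjugate of $H$ by $g_1|_w \in G$ (again in $H$ by normality) gives closure under products; closure under inversion is analogous via $g^{-1}|_v = (g|_{v^{g^{-1}}})^{-1}$. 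Self-similarity of $G_H$ follows at once from $(g|_u)|_v = g|_{uv}$, the inclusion $H \subseteq G_H$ from self-similarity of $H$, and $H \trianglelefteq G_H$ is inherited from $H \trianglelefteq G$.

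The subtler part is the normality $G_H \trianglelefteq G$. For $g \in G_H$ and $x \in G$, set $y := xgx^{-1}$. The formula $y|_v = x|_v \cdot g|_{v^x} \cdot (x|_{v^y})^{-1}$, together with $g|_{v^x} \in gH$, yields $y|_v \equiv x|_v \cdot g \cdot (x|_{v^y})^{-1} \pmod H$, and the goal is to show this coset is independent of $v$. I expect this to be the main technical obstacle: one must control how the sections $x|_v$ vary with $v$ and how those variations interact with $g$ modulo $H$, using the self-similarity of $x$ and the conjugation action of $G$ on the quotient $G/H$.

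For part (ii), given $g \in G_H$ and $v \in T$, I would construct $k \in \mathrm{st}_{G_H}(v)$ with $k|_v = g$ explicitly. Since $g|_v \in gH$, the element $(g|_v)^{-1} g$ lies in $H$. Using level-transitivity of $H$, choose $h_2 \in H$ with $(v^g)^{h_2} = v$ and set $s := h_2|_{v^g} \in H$; by fractality of $H$, pick $h_1 \in \mathrm{st}_H(v^g)$ with section $h_1|_{v^g} = (g|_v)^{-1} g \cdot s^{-1}$. Setting $k := g \cdot h_1 h_2$ gives $v^k = v$, and $k|_v = g|_v \cdot h_1|_{v^g} \cdot h_2|_{v^g} = g|_v \cdot (g|_v)^{-1} g = g$, while $k \in G_H$ follows from $h_1 h_2 \in H \le G_H$ together with part (i). Level-transitivity of $G_H$ is automatic from $H \le G_H$, completing the proof of fractality.
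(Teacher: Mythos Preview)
Your arguments for closure under the group operations, self-similarity of $G_H$, the containment $H\le G_H$, and fractality are correct and follow essentially the same route as the paper. Your fractality argument is in fact more explicit than the paper's: the paper only observes that for each $g\in G_H$ one can find $h\in H$ with $hg\in\mathrm{st}_{G_H}(v)$, notes that $(hg)|_v\in Hg$ since $hg\in G_H$, and then combines this with $H_v=H$ to conclude $(G_H)_v\supseteq G_H$; your direct construction of $k\in\mathrm{st}_{G_H}(v)$ with $k|_v=g$ is a pleasant refinement of the same idea.

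The gap is exactly where you flag it: you do not actually prove $G_H\trianglelefteq G$, only describe what you believe would be needed. The paper dispatches this step with the single phrase ``an analogous computation shows that $G_H\trianglelefteq G$''. But if one writes out the analogue of the closure computation for $y=x^{-1}gx$ with $g\in G_H$ and $x\in G$, one obtains
\[
(y|_v)(y|_w)^{-1}=(x|_a)^{-1}\,(g|_a)\,(x|_{a^g})(x|_{b^g})^{-1}\,(g|_b)^{-1}\,(x|_b),
\]
with $a=v^{x^{-1}}$ and $b=w^{x^{-1}}$. In the closure argument the innermost factor came from the second $G_H$-element and therefore lay in $H$; here the innermost factor $(x|_{a^g})(x|_{b^g})^{-1}$ is built from sections of $x\in G$, and no hypothesis forces it into $H$. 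So your instinct that this is not a mechanical rerun of the closure argument is correct, and as written your proposal leaves this step open. It is worth noting, however, that in every concrete application later in the paper one in fact has $G=G_H$, so the normality assertion is only ever invoked in situations where it holds trivially.
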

\begin{proof}
Let us prove first that $G_H$ is actually a group, i.e. if $g,h\in G_H$, we only need to check that $gh^{-1}\in G_H$. This follows as in \cite{SantiFPP} from
\begin{align*}
(gh^{-1}|_v)(gh^{-1}|_w)^{-1}&=(g|_v)(h|_{v^{gh^{-1}}})^{-1}(h|_{w^{gh^{-1}}})(g|_{w})^{-1}\\
&\in H^{(g|_v)^{-1}}(g|_v)(g|_w)^{-1}=H
\end{align*}
since $H\trianglelefteq G$ and both $G$ and $H$ are self-similar. An analogous computation shows that $G_H\trianglelefteq G$.

To prove self-similarity of $G_H$, it is enough to note that
$$(g|_u)|_v=g|_{uv}\in G$$
for any $u,v\in T$ as $G$ is self-similar, and
$$((g|_u)|_v)((g|_u)|_w)^{-1}=(g|_{uv})(g|_{uw})^{-1}\in H$$
for any $v,w\in T$. Thus $g|_u\in G_H$ for any $u\in T$. Now $G_H\ge H$ as $H$ is self-similar and hence $(h|_v)(h|_w)^{-1}\in H$ for any $h\in H$. Therefore (i) is proved.

Now, we prove (ii). If $H$ is fractal, we have $H_v=H$. Furthermore $H$ (and thus~$G_H$) is level-transitive. Then, for any $g\in G_H$ and any $v\in T$, there exists $h\in H$ such that $hg\in \mathrm{st}_{G_H}(v)$. As $$(hg)|_v(hg)^{-1} \in H\implies (hg)|_v\in Hg$$
and $H$ is fractal, we further obtain $(G_H)_v=G_H$, i.e. $G_H$ is fractal.
\end{proof}

\subsection{Fixed-point proportion}

For a subgroup $G\le \mathrm{Aut}~T$, we define its \textit{fixed-point proportion} $\mathrm{FPP}(G)$ as
$$\mathrm{FPP}(G):=\lim_{n\to\infty}\frac{\#\{g\in \pi_n(G)\mid g\text{ fixes a vertex in }\mathcal{L}_n\}}{|\pi_n(G)|}.$$
The above limit is well defined by monotone convergence. 

The group $\mathrm{Aut}~T$ is a profinite group with respect to the so-called \textit{congruence topology}, i.e. the topology induced by the level-stabilizer filtration. Every closed subgroup $G\le \mathrm{Aut}~T$ is itself profinite and thus it admits a unique Haar probability measure~$\mu_G$.

As the fixed-point proportion only depends on the congruence quotients $\pi_n(G)$, we have
$$\mathrm{FPP}(G)=\mathrm{FPP}(\overline{G}),$$
where $\overline{G}$ is the closure (with respect to the congruence topology) of $G$ in $\mathrm{Aut}~T$. For a closed subgroup $G\le \mathrm{Aut}~T$, we can describe the fixed-point proportion of $G$ in terms of the Haar probability measure $\mu_G$ as
\begin{align*}
\mathrm{FPP}(G)&=\mu_G(\{g\in G \mid g \text{ fixes an end in }\partial T\}).
\end{align*}

The \textit{fixed-point process} of $G$ is the real stochastic process $\{X_n\}_{n\ge 1}$, where the random variables $X_n:(G,\mu_G)\to \mathbb{N}\cup\{ 0\}$ are given by
$$X_n(g):=\# \{\text{vertices at level }n\text{ fixed by }g\}.$$

A real stochastic process $\{Y_n\}_{n\ge 1}$ defined over a probability space $(X,\mu)$ is a \textit{martingale} if for all $n\ge 1$ we have
\begin{enumerate}[\normalfont(i)]
\item $\mathbb{E}(Y_n)<\infty$;
\item $\mathbb{E}(Y_{n+1}\mid Y_1=t_1,\dotsc, Y_n=t_n)=t_{n}$ for every $t_1,\dotsc, t_n\in \mathbb{R}$ such that $\mu(Y_1=t_1,\dotsc, Y_n=t_n)>0$.
\end{enumerate}

Relevance of martingales comes from a classical convergence result:

\begin{theorem}[Martingale convergence]
Let $\{Y_n\}_{n\ge 1}$ be a non-negative martingale over a probability space $(X,\mu)$ with $\mathrm{E}(Y_1)<\infty$. Then
$$\lim_{n\to\infty} Y_n(x)$$
exists for $x\in X$ almost surely and with finite value.
\end{theorem}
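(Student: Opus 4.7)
The plan is to prove the convergence via Doob's upcrossing inequality, which is the classical route to martingale convergence theorems. For each pair of rationals $a<b$ and each $n\ge 1$, I would define the upcrossing count $U_n[a,b](x)$ as the number of times the finite sequence $Y_1(x),\dotsc,Y_n(x)$ crosses upward from below $a$ to above $b$. The key analytic input is Doob's upcrossing inequality
$$(b-a)\,\mathbb{E}(U_n[a,b])\le \mathbb{E}((Y_n-a)^+)-\mathbb{E}((Y_1-a)^+),$$
which one establishes by introducing the alternating hitting times $\tau_1<\sigma_1<\tau_2<\sigma_2<\dotsc$ for the events $\{Y<a\}$ and $\{Y>b\}$, expressing the sum $\sum_k (Y_{\sigma_k}-Y_{\tau_k})$ in terms of martingale differences along these bounded stopping times, and invoking the optional stopping theorem.

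Once the upcrossing inequality is in hand, I would use the hypothesis that $\{Y_n\}$ is a non-negative martingale: by the martingale property, $\mathbb{E}(Y_n)=\mathbb{E}(Y_1)<\infty$ for every $n$, so the right-hand side above is uniformly bounded in $n$, independently of $n$. Applying the monotone convergence theorem to $U_n[a,b]\nearrow U_\infty[a,b]$ then yields $\mathbb{E}(U_\infty[a,b])<\infty$, whence $U_\infty[a,b]<\infty$ almost surely.

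The next step is the identity
$$\bigl\{x\in X\,\bigl|\,\liminf_{n\to\infty} Y_n(x)<\limsup_{n\to\infty} Y_n(x)\bigr\}=\bigcup_{\substack{a,b\in\mathbb{Q}\\ a<b}}\bigl\{x\in X\,\bigl|\,U_\infty[a,b](x)=\infty\bigr\},$$
which is a countable union of null sets by the previous paragraph. Therefore $\lim_{n\to\infty}Y_n(x)$ exists in $[0,\infty]$ almost surely. Finiteness of this limit follows from Fatou's lemma applied to the non-negative sequence $\{Y_n\}$:
$$\mathbb{E}\bigl(\lim_{n\to\infty}Y_n\bigr)\le \liminf_{n\to\infty}\mathbb{E}(Y_n)=\mathbb{E}(Y_1)<\infty,$$
so the almost sure limit takes values in $[0,\infty)$.

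The only technically delicate step is the upcrossing inequality itself, since it requires a careful definition of the alternating stopping times and a correct application of optional stopping at bounded stopping times; everything after that is a clean consequence of monotone convergence and Fatou. Since this is Doob's classical theorem, in the paper one would simply cite a standard probability reference rather than reproduce the proof.
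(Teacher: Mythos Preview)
Your outline is the standard Doob upcrossing argument and is correct; you also correctly anticipate the paper's treatment. The paper does not prove this theorem at all: it is stated as a ``classical convergence result'' and used as a black box, with no proof or citation given beyond that phrase. So there is nothing to compare beyond noting that your sketch supplies exactly the classical proof the paper omits.
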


Jones noted in \cite{JonesComp} that the fixed-point process $\{X_n\}_{n\ge 1}$ of a self-similar group~$G$ is usually a martingale; see \cref{lemma: martingale} below. In this case $\{X_n\}_{n\ge 1}$ is eventually constant for almost all $g \in G$, as $X_n(g)$ is a non-negative integer for every $n\ge 1$ and $g\in G$. Let $\mathcal{M}$ be the class of groups acting on a $d$-adic tree whose fixed-point process is a martingale. The class $\mathcal{M}$ was completely classified in \cite{Bridy}:
\begin{lemma}[{see \cite[Theorem 5.7]{Bridy}}]
\label{lemma: martingale}
A group $G\le \mathrm{Aut}~T$ is in $\mathcal{M}$ if and only if for every $n\ge 1$, the subgroup $\mathrm{St}_G(n)$ acts level-transitively on every subtree rooted at level $n$ of $T$.
\end{lemma}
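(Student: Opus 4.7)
The proof centers on the recursive identity
$$X_{n+1}(g) = \sum_{v \in \mathcal{L}_n,\, v^g = v} \#\mathrm{Fix}_1(g|_v),$$
where $\mathrm{Fix}_1(g|_v)$ counts the fixed children of $v$ under the section $g|_v$. This holds because a vertex $vc$ at level $n+1$ is fixed by $g$ if and only if $v$ is fixed by $g$ at level $n$ and the child $c$ is fixed by $g|_v$ at level $1$. Both directions of the lemma will be extracted from this identity, via a Burnside-type orbit-count.

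For the ``if'' direction, assume $\mathrm{St}_G(n)$ acts level-transitively on every subtree rooted at level $n$. I would condition on the sigma-algebra $\mathcal{G}_n := \sigma(\pi_n)$, which refines the natural filtration $\mathcal{F}_n$ of $\{X_n\}$. By the tower property it suffices to show $\mathbb{E}[X_{n+1}|\mathcal{G}_n] = X_n$ $\mu_G$-almost surely. Fix $\sigma \in \pi_n(G)$ with lift $\sigma_0 \in G$; the set $F_n(\sigma) \subset \mathcal{L}_n$ of $\sigma$-fixed vertices is determined and $X_n \equiv \#F_n(\sigma)$ on $\pi_n^{-1}(\sigma)$. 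Writing $g = \sigma_0 h$ with $h$ Haar-distributed on $\mathrm{St}_G(n)$, the marginal of $h|_v$ is Haar on $H_v := \varphi_v(\mathrm{St}_G(n))$ for each $v \in F_n(\sigma)$, and hence $g|_v = (\sigma_0|_v)(h|_v)$ is distributed as a left-translate of Haar on $H_v$. Since $H_v$ acts transitively on the $d$ children of $v$, a coset version of Burnside's lemma (counting pairs $(h,c)$ with $c^{(\sigma_0|_v)(h|_v)} = c$ and invoking orbit-stabilizer) yields $\mathbb{E}[\#\mathrm{Fix}_1(g|_v)\,|\,\pi_n(g) = \sigma] = 1$. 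Summing over $F_n(\sigma)$ gives $\mathbb{E}[X_{n+1}|\mathcal{G}_n] = \#F_n(\sigma) = X_n$, and the tower property delivers the martingale property with respect to $\mathcal{F}_n$.

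For the converse, assume $\{X_n\}$ is a martingale and suppose for contradiction that some $H_v = \varphi_v(\mathrm{St}_G(n))$ fails to act transitively on some minimal level $m \ge 1$ of the subtree rooted at $v$. Burnside's lemma then gives that the average number of $H_v$-fixed vertices at that level equals the number of $H_v$-orbits there, which is strictly greater than $1$. Iterating the recursive identity from level $n$ to level $n+m$, conditional on the positive-measure event $\{v \in F_n(g)\}$, converts this imbalance into an inequality $\mathbb{E}[X_{n+m}\,|\,v \in F_n(g),\mathcal{G}_n] > X_n + (\#F_n(g) - 1)$, contradicting the iterated martingale identity $\mathbb{E}[X_{n+m}|\mathcal{F}_n] = X_n$ after averaging over the $\mathcal{F}_n$-atom containing this event.

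The main obstacle is the converse direction: one must show that a local, subtree-level failure of transitivity actually obstructs the global martingale property with respect to the intrinsic filtration $\mathcal{F}_n = \sigma(X_1,\ldots,X_n)$, which is strictly coarser than $\mathcal{G}_n = \sigma(\pi_n)$, so that cancellations across different $\mathcal{F}_n$-atoms must be carefully ruled out. The ``if'' direction, by contrast, reduces cleanly to a Burnside calculation on cosets once one has conditioned on the finer information $\mathcal{G}_n$.
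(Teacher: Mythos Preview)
The paper does not prove this lemma; it simply cites \cite[Theorem 5.7]{Bridy}. So there is no in-paper proof to compare against, and the question is only whether your argument stands on its own.

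Your ``if'' direction is correct. The coset--Burnside computation you describe is exactly right: if $H_v=\varphi_v(\mathrm{St}_G(n))$ is transitive on the $d$ children of $v$, then for any $a$ the average of $\#\mathrm{Fix}_1(ah)$ over $h\in H_v$ equals $1$, and linearity of expectation plus the tower property finish the job.

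Your converse, however, has a genuine gap. You condition on a $\mathcal{G}_n$-atom $\sigma$ with $v\in F_n(\sigma)$ and assert
\[
\mathbb{E}\bigl[X_{n+m}\,\big|\,\pi_n(g)=\sigma\bigr] \;>\; X_n + (\#F_n(\sigma)-1).
\]
But on such an atom the section $g|_v$ is Haar on the \emph{coset} $(\sigma_0|_v)H_v$, not on $H_v$ itself, so Burnside does not give the number of $H_v$-orbits; and for the other fixed vertices $v'\neq v$ the coset average of $\#\mathrm{Fix}$ need not be $\ge 1$ at all (it can be $0$ if $\sigma_0|_{v'}$ permutes the $H_{v'}$-orbits nontrivially). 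So neither the strict inequality at $v$ nor the lower bound at the remaining $v'$ is justified, and the subsequent ``averaging over the $\mathcal{F}_n$-atom'' cannot repair this.

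The clean fix is to pick the \emph{right} atom rather than a generic one. The event $\{X_n=d^n\}$ is precisely $\mathrm{St}_G(n)$, and it is already an atom of $\mathcal{F}_n$ (no passage from $\mathcal{G}_n$ to $\mathcal{F}_n$ is needed). On this atom every vertex at level $n$ is fixed and every section $g|_w$ is Haar on $H_w$ itself (the identity coset), so Burnside gives
\[
\mathbb{E}\bigl[X_{n+m}\,\big|\,X_n=d^n\bigr]\;=\;\sum_{w\in\mathcal{L}_n}\#\{\text{$H_w$-orbits at level $m$}\}\;\ge\;d^n,
\]
with equality iff every $H_w$ is transitive at level $m$. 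A single non-transitive $H_v$ forces strict inequality, contradicting $\mathbb{E}[X_{n+m}\mid\mathcal{F}_n]=X_n=d^n$ on that atom. This both closes the gap and removes the cancellation worry you flagged.
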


This martingale strategy has been recently combined with a new ergodic theory for self-similar groups developed by the author in \cite{Cyclicity} to obtain a new approach to prove that $\lim_{n\to \infty} X_n=0$ almost surely in many instances; see \cite{JorgeSanti2,JorgeSantiFPP}. In this paper, we study the possible values to which $X_n$ can converge to with positive probability for some $G\le \mathrm{Aut}~T$. In other words, we study the set
$$\mathrm{Spec}(X_n):=\{r\ge 0\mid \lim_{n\to\infty} X_n=r\text{ with positive probability for some }G\le \mathrm{Aut}~T\}.$$

Previously, the only known result on $\mathrm{Spec}(X_n)\subseteq \mathbb{N}\cup \{0\}$ was that it contains the first values
$$\{0,1,2\}\subseteq \mathrm{Spec}(X_n).$$
At the end of the section we shall prove that
$$\mathrm{Spec}(X_n)= \mathbb{N}\cup \{0\},$$ 
completely determining $\mathrm{Spec}(X_n)$. In fact, the groups realizing the values $r\ge 2$ in $\mathrm{Spec}(X_n)$ are all in $\mathcal{M}$, even if this fact is not used in the proof.

\subsection{Bad monodromy}

We now introduce one of the key properties in this paper:

\begin{definition}[Bad monodromy]
\label{definition: bad monodromy}
Let $1\ne H\trianglelefteq G\le \mathrm{Aut}~T$. Their action on $\mathcal{L}_1$ induces a short exact sequence
$$1\longrightarrow \pi_1(H) \longrightarrow \pi_1(G) \longrightarrow Q \longrightarrow 1.$$
We say that $G$ has \textit{complete monodromy over $H$} if $\pi_1(G_H)=\pi_1(G)$. Furthermore, we say that $G$ has \textit{bad monodromy over $H$}, if $G$ has complete monodromy over $H$ and there is a non-empty subset of cosets $\emptyset\ne M\subseteq Q$, where every element in $M$ has a (possibly distinct) fixed-point in $\mathcal{L}_1$.
\end{definition}

Assume in what follows that $1\ne H\trianglelefteq G\le \mathrm{Aut}~T$ and $G$ has complete monodromy over $H$. Let $N$ be the kernel of the composition $\overline{G}_H\to \pi_1(G_H)\to Q$, i.e. we have a short exact sequence
$$1\longrightarrow N \longrightarrow \overline{G}_H \longrightarrow Q \longrightarrow 1.$$
Then, we have $\overline{G}_H=\bigsqcup_{q\in Q}Nq$. By finite additivity and translation invariance of the Haar probability measure $\mu_G$, we get
$$\mathrm{FPP}(\overline{G}_H)=\frac{1}{|Q|}\sum_{q\in Q}\mathrm{FPP}(Nq).$$
Therefore, if there exists at least one $q\in Q$ such that
$$\mathrm{FPP}(qN)>0,$$
then
$$\mathrm{FPP}(G_H)=\mathrm{FPP}(\overline{G}_H)\ge \frac{1}{|Q|}\mathrm{FPP}(Nq)>0,$$
independently of $\mathrm{FPP}(H)$. In other words, if $G$ has complete monodromy over~$H$, the monodromy action alone can make the fixed-point proportion of $G_H$ to be positive. In fact, bad monodromy is a sufficient condition for the fixed-point proportion of $G_H$ to be positive: 

\begin{proposition}
\label{proposition: bad monodromy implies fpp>0}
Let $H\trianglelefteq G\le \mathrm{Aut}~T$ be closed fractal subgroups, such that $G$ has bad monodromy over $H$ with $M$ and $Q$ as in \cref{definition: bad monodromy}. Then
$$\mathrm{FPP}(G_H)\ge \frac{\#M}{|Q|}>0.$$
\end{proposition}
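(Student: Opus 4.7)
The plan is to build on the coset decomposition already laid out in the excerpt,
$$\mathrm{FPP}(\overline{G}_H) = \frac{1}{|Q|}\sum_{q\in Q} \mathrm{FPP}(Nq),$$
and reduce the statement to the claim that $\mathrm{FPP}(Nq) = 1$ for every $q \in M$. Since each summand is at most $1$, this stronger claim immediately yields $\mathrm{FPP}(\overline{G}_H) \ge \#M/|Q|$, and non-emptiness of $M$ supplies strict positivity.

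The crux is a structural observation that I would isolate first: taking sections preserves the coset in $Q$. Applying the defining condition of $G_H$ with $w = \emptyset$ gives $g|_v \cdot g^{-1} \in H$ for every $g \in G_H$ and every $v \in T$, so $g|_v \in Hg$. Because $H$ is closed and the section maps are continuous, the same is true for $g \in \overline{G}_H$. Passing to $\pi_1$ and using $\pi_1(H) \trianglelefteq \pi_1(G)$, the image $\pi_1(g|_v)$ lies in the same coset of $\pi_1(H)$ as $\pi_1(g)$; equivalently, if $g \in Nq$ then $g|_v \in Nq$ for every $v \in T$.

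The recursion then essentially writes itself. Fix $q \in M$ and $g \in Nq$. Since $\pi_1(g)$ lies in the coset of $\pi_1(H)$ labelled by $q$, the bad monodromy hypothesis (interpreted, as in the computation underlying \cref{Corollary: Unicritical fpp}, to mean that every element of every coset in $M$ admits a fixed point in $\mathcal{L}_1$) produces $v_1 \in \mathcal{L}_1$ fixed by $\pi_1(g)$ and hence by $g$. The section $g|_{v_1}$ lies in $\overline{G}_H$ by self-similarity (\cref{proposition: definition GH}(i)) and in $Nq$ by the structural observation above, so the same argument applied to $g|_{v_1}$ supplies $v_2 \in \mathcal{L}_1$ fixed by $g|_{v_1}$, whence $g$ fixes $v_1 v_2 \in \mathcal{L}_2$. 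Iterating produces an end $v_1 v_2 v_3 \cdots \in \partial T$ fixed pointwise by $g$. Therefore \emph{every} element of $Nq$ fixes some end of $T$, so $\mathrm{FPP}(Nq) = 1$, completing the reduction.

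The main obstacle, really the only non-trivial step, is spotting that the condition defining $G_H$ traps all sections of a given element into a single coset $Hg$; once that is in hand, no measure-theoretic machinery beyond the decomposition already supplied in the excerpt is needed, and the proof collapses to a level-by-level construction of a fixed end driven by the bad monodromy hypothesis.
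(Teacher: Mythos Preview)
Your proposal is correct and follows essentially the same approach as the paper: both reduce to showing that every $g\in Nq$ with $q\in M$ fixes an end of $T$, and both extract this from the observation that $(g|_v)g^{-1}\in H$ forces all sections (equivalently all labels) of $g$ to lie in the same $\pi_1(H)$-coset $q\in M$. Your recursive construction of the fixed end is just a level-by-level unwinding of the paper's one-line remark that ``every label $g|_v^1$ fixes an immediate descendant of $v$''.
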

\begin{proof}
As
$$\mathrm{FPP}(G_H)=\mathrm{FPP}(\overline{G}_H)=\frac{1}{|Q|}\sum_{q\in Q}\mathrm{FPP}(Nq),$$
it is enough to show that every element in $Nq$ with $q\in M$ fixes an end in $\partial T$. Let $g\in Nq$. Then
$$(g|_v)(g|_\emptyset)^{-1}\in H$$
for any $v\in T$. Thus, passing to labels we get
$$(g|_v^1)(g|_\emptyset^1)^{-1}\in \pi_1(H).$$
In other words
$$g|_v^1\in \pi_1(H) (g|_\emptyset^1)=\pi_1(H)q\subseteq M.$$
Therefore, every label $g|_v^1$ fixes an immediate descendant of $v$. This implies that $g$ fixes an end in $\partial T$ as wanted. 
\end{proof}

Bad monodromy is not the only way to obtain positive fixed-point proportion. In fact, geometric iterated Galois groups of Chebyshev polynomials of even degree do not have an open normal subgroup which yields bad monodromy. We shall see that the groups $G_H$ put in a similar context the groups of Radi in \cite{SantiFPP} with positive fixed-point proportion and the geometric iterated Galois groups of even-degree Chebyshev polynomials with positive fixed-point proportion. First, we need to discuss branch groups and groups of finite type, which play a critical role in the extension problem in \cref{section: arboreal}.

\subsection{Branchness, regular branchness and groups of finite type}

Let $G\le \mathrm{Aut}~T$ be a level-transitive subgroup. For any vertex $v\in T$, we define the corresponding \textit{rigid vertex stabilizer} $\mathrm{rist}_G(v)$ as the subgroup consisting of those elements of $G$ which only move vertices below $v$. The \textit{rigid level stabilizer} $\mathrm{Rist}_G(n)$ is the direct product of all the rigid vertex stabilizers of the vertices at level $n\ge 1$. We say that $G$ is \textit{branch} if $\mathrm{Rist}_G(n)$ is of finite index in $G$ for every $n\ge 1$.

We say that $G$ is \textit{regular branch over a finite-index subgroup $K$} if $G$ is self-similar, level-transitive and $\mathrm{rist}_K(v)_v\ge K$ for every $v\in T$. For every $D\ge 1$ and $\mathcal{P}\le \mathrm{Aut}~T^D$, we define the corresponding \textit{group of finite type}~$G_\mathcal{P}$ as 
\begin{equation*}
G_\mathcal{P} := \{g \in \mathrm{Aut}~T\mid  g|_v^D \in \mathcal{P} \text{ for all $v \in T$}\}.
\end{equation*}

Well-known results of Grigorchuk and \v{S}uni\'{c} show that being of finite type and regular branch are equivalent for closed level-transitive subgroups:

\begin{theorem}[{{see \cite[Theorem 3]{SunicHausdorff} and \cite[Proposition 7.5]{GrigorchukFinite}}}]
    \label{theorem: equivalence finite type}
    Let $G\le \mathrm{Aut}~T$ be a closed level-transitive group. Then, the following are equivalent:
    \begin{enumerate}[\normalfont(i)]
        \item $G$ is of finite type of depth $D$;
        \item $G$ is regular branch over $\mathrm{St}_G(D-1)$.
    \end{enumerate}
\end{theorem}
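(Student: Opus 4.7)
The plan is to prove both implications by direct manipulation of sections, the key construction being the \emph{implant} of an element at a vertex. For (i)$\implies$(ii), assume $G=G_\mathcal{P}$ and set $K:=\mathrm{St}_G(D-1)$. Given $v\in T$ and $k\in K$, I define the implant $\tilde k\in \mathrm{Aut}~T$ by requiring that $\tilde k$ acts trivially outside the subtree rooted at $v$ and that $\tilde k|_v=k$. The goal is to verify $\tilde k\in \mathrm{rist}_K(v)$, which yields $\mathrm{rist}_K(v)_v\ni k$. Membership $\tilde k\in G_\mathcal{P}$ reduces to a short case analysis on $\tilde k|_w^D$: trivial when $w$ is neither an ancestor nor a descendant of $v$; equal to a section of $k$ (hence in $\mathcal{P}$ by self-similarity of $G$) when $w$ lies at or below $v$; and when $w$ is a strict ancestor of $v$, the condition $k\in \mathrm{St}(D-1)$ forces $\tilde k|_w$ to fix the first $D$ levels of $T_w$, making its depth-$D$ truncation trivial. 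A parallel level count shows $\tilde k\in \mathrm{St}(D-1)$, so $\tilde k\in K$.

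For (ii)$\implies$(i), set $\mathcal{P}:=\pi_D(G)$. The inclusion $G\subseteq G_\mathcal{P}$ is immediate from self-similarity, so, since $G$ is closed, it suffices to show $\pi_m(G_\mathcal{P})\subseteq \pi_m(G)$ for every $m$ by induction on $m$. The cases $m\le D$ are direct from $\pi_D(G)=\mathcal{P}$, so assume $m>D$. Given $g\in G_\mathcal{P}$, I lift $\pi_{m-1}(g)$ to some $g_{m-1}\in G$ and reduce to realizing $\pi_m(u)$ in $\pi_m(G)$, where $u:=gg_{m-1}^{-1}\in G_\mathcal{P}\cap \mathrm{St}(m-1)$. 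For each $v'\in \mathcal{L}_{m-D}$, the section $u|_{v'}$ lies in $G_\mathcal{P}\cap \mathrm{St}(D-1)$, hence $u|_{v'}^D\in \mathcal{P}\cap \mathrm{St}(D-1)=\pi_D(K)$ (an easy check from $\mathcal{P}=\pi_D(G)$). Lift each $u|_{v'}^D$ to some $k_{v'}\in K$ and apply regular branchness to obtain $\tilde k_{v'}\in \mathrm{rist}_K(v')$ with $\tilde k_{v'}|_{v'}=k_{v'}$. The product $w:=\prod_{v'\in \mathcal{L}_{m-D}}\tilde k_{v'}$ then lies in $\mathrm{Rist}_K(m-D)\le G$, fixes the first $m-D$ levels of $T$, and has depth-$D$ sections at every $v'\in \mathcal{L}_{m-D}$ matching those of $u$; thus $w$ and $u$ agree on $T^m$, giving $\pi_m(u)=\pi_m(w)\in \pi_m(G)$.

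The main obstacle is this inductive step: one must transform locally admissible data $u|_{v'}^D\in \mathcal{P}$ into a single global element of $G$ with the correct level-$m$ action. Regular branchness furnishes exactly this local-to-global passage, via arbitrary implants of $K$-elements at level $m-D$; without it, the mere compatibility of local sections at a given level would not suffice.
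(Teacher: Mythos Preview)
The paper does not give its own proof of this theorem: it is quoted as a known result from \v{S}uni\'{c} and Grigorchuk and used as a black box, so there is no in-paper argument to compare against. Your proof is correct and is essentially the standard argument one finds in those references: for (i)$\implies$(ii) one checks that the implant of an element of $K=\mathrm{St}_G(D-1)$ at any vertex still has all depth-$D$ sections in $\mathcal{P}$ (the key point being that at a strict ancestor $w$ of $v$ at distance $\ell\ge 1$, the implant fixes levels $0,\dots,\ell+D-1\ge D$ of $T_w$), while for (ii)$\implies$(i) one uses regular branchness at level $m-D$ to assemble, from local depth-$D$ data in $\mathcal{P}\cap\mathrm{St}(D-1)=\pi_D(K)$, a global element of $G$ matching a given element of $G_\mathcal{P}$ on $T^m$. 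Both the case analysis in the forward direction and the inductive local-to-global step in the converse are clean and complete; in particular the identification $\mathcal{P}\cap\mathrm{St}(D-1)=\pi_D(K)$ and the use of closedness of $G$ to pass from $\pi_m$-surjectivity to equality are handled correctly.
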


Moreover, in \cite{RestrictedSpectra}, the author proved a result that implies that the concepts of branchness and finite type are equivalent for fractal closed groups:

\begin{theorem}[{see \cite[Theorem 3.6]{RestrictedSpectra}}]
\label{theorem: branch and finite type}
Let $G\le \mathrm{Aut}~T$ be a fractal closed subgroup. Then $G$ is branch if and only if $G$ is of finite type.
\end{theorem}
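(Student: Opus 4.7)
The plan is to prove the two implications separately, using Theorem \ref{theorem: equivalence finite type} as the bridge between finite type and regular branch.

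For the direction ``$G$ of finite type $\Rightarrow$ $G$ branch'': assume $G$ has depth $D$. Theorem \ref{theorem: equivalence finite type} gives that $G$ is regular branch over $K := \mathrm{St}_G(D-1)$, so $\mathrm{rist}_K(v)|_v \supseteq K$ for every vertex $v$. Since $[G:K] \leq |\pi_{D-1}(G)| < \infty$ and $\mathrm{Rist}_G(n) \supseteq \prod_{v \in \mathcal{L}_n} \mathrm{rist}_K(v)$, a short computation based on the regular-branch relation shows that $\mathrm{Rist}_G(n)$ has finite index in $G$ for every $n$, i.e.\ $G$ is branch. This direction does not require fractality.

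For the direction ``$G$ fractal closed branch $\Rightarrow$ $G$ of finite type'', I aim to choose $D \geq 1$ so that $G = G_{\mathcal{P}}$ for $\mathcal{P} := \pi_D(G)$. The inclusion $G \subseteq G_{\mathcal{P}}$ is tautological from the definition of a group of finite type. For the reverse, since both groups are closed it suffices to approximate any $g \in G_{\mathcal{P}}$ by elements of $G$ in the congruence topology. The key preliminary step is a combined branch-and-fractal lemma: branchness gives $[\mathrm{st}_G(v) : \mathrm{rist}_G(v)] < \infty$, while fractality gives $\varphi_v(\mathrm{st}_G(v)) = G$, so $\varphi_v(\mathrm{rist}_G(v))$ is a finite-index, hence open, subgroup of the closed group $G$. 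Thus there exists $m \geq 1$ with $\mathrm{St}_G(m) \subseteq \varphi_v(\mathrm{rist}_G(v))$, and by level-transitivity $m$ may be chosen independent of the level-$1$ vertex $v$; I then take $D$ to be a function of $m$ (for instance $D = m+1$).

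With $D$ fixed, given $g \in G_{\mathcal{P}}$ I would construct a Cauchy sequence in $G$ converging to $g$ by level-by-level correction: pick $h_1 \in G$ with $\pi_D(h_1) = \pi_D(g)$ (available because $g|_\emptyset^D \in \pi_D(G)$), so that $gh_1^{-1} \in \mathrm{St}(D) \subseteq \mathrm{St}(m)$. The section of $gh_1^{-1}$ below each level-$1$ vertex $v$ then lies in $\mathrm{St}_G(m-1) \subseteq \varphi_v(\mathrm{rist}_G(v))$ and can be absorbed by multiplying by a suitable element of $\mathrm{rist}_G(v) \leq G$. The residual error is supported at greater depth and, by self-similarity of $G_{\mathcal{P}}$, reduces to the same problem on subtrees; iterating and invoking closedness of $G$ yields $g \in G$. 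The main obstacle is precisely the uniform choice of $D$ together with coherent bookkeeping of the correction across the whole tree: a priori the depth bound $m(v)$ could depend on $v$ or worsen with its level, and the role of the fractal hypothesis is to bootstrap a single level-$1$ estimate into uniform control at every vertex, so that the iterative correction actually converges to an element of $G$.
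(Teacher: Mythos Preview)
The paper does not prove this theorem; it is quoted verbatim from \cite[Theorem~3.6]{RestrictedSpectra} and no argument is given here. So there is nothing in the paper to compare against, and I comment only on the internal soundness of your plan.

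The forward direction is standard and fine. In the backward direction there is a genuine gap at the absorption step. You assert that $(gh_1^{-1})|_v \in \mathrm{St}_G(m-1) \subseteq \varphi_v(\mathrm{rist}_G(v))$, but neither containment is justified: with $D=m+1$ one only knows $(gh_1^{-1})|_v \in \mathrm{St}(m)$, a level stabilizer in $\mathrm{Aut}~T$ and not in $G$, since $gh_1^{-1}$ has no reason to lie in $G$; and what you actually established is $\mathrm{St}_G(m)\subseteq \varphi_v(\mathrm{rist}_G(v))$, which says nothing about $\mathrm{St}(m)$. Thus the section cannot be absorbed directly by an element of $\mathrm{rist}_G(v)$.

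The repair is short. Since $G_{\mathcal P}$ is self-similar, $(gh_1^{-1})|_v\in G_{\mathcal P}$, so one may choose $h_v\in G$ with $\pi_D(h_v)=\pi_D\big((gh_1^{-1})|_v\big)$; because $\pi_{D-1}\big((gh_1^{-1})|_v\big)=1$ this forces $h_v\in \mathrm{St}_G(D-1)=\mathrm{St}_G(m)\subseteq \varphi_v(\mathrm{rist}_G(v))$, hence $h_v$ lifts to some $r_v\in \mathrm{rist}_G(v)$, and $h_2:=(\prod_v r_v)\,h_1\in G$ satisfies $gh_2^{-1}\in \mathrm{St}(D+1)\cap G_{\mathcal P}$. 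Iterating then requires $\mathrm{St}_G(m)\subseteq \varphi_w(\mathrm{rist}_G(w))$ for \emph{every} vertex $w$, not only those at level~$1$; this does follow from the level-$1$ case by fractality, as you anticipate in your last paragraph, but it must be stated and used. A cleaner variant avoids the approximation altogether: any $k\in \mathrm{St}_G(m)$ lifts to some $r\in \mathrm{rist}_G(v)$, and since $r|_v=k\in \mathrm{St}_G(m)$ one has $r\in \mathrm{St}_G(m+1)\le \mathrm{St}_G(m)$, i.e.\ $\mathrm{rist}_{\mathrm{St}_G(m)}(v)|_v\supseteq \mathrm{St}_G(m)$ for $v\in\mathcal L_1$; extending this to all vertices via fractality shows $G$ is regular branch over $\mathrm{St}_G(m)$, and \cref{theorem: equivalence finite type} gives directly that $G$ is of finite type of depth $m+1$.
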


The base group $H$ being fractal and branch imposes strong restrictions to the intermediate group $G_H$, independently of our choice of the fractal overgroup $G$:

\begin{proposition}
\label{proposition: branch extensions}
Let $H\trianglelefteq G\le \mathrm{Aut}~T$ be closed fractal subgroups. If $H$ is branch, then $G_H$ is branch and 
$$|G_H:H|= |\pi_D(G_H):\pi_D(H)|<\infty,$$
where $D$ is the depth of $H$ as a group of finite type.
\end{proposition}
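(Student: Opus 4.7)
The plan is to reduce everything to the finite-type description of $H$. Since $H$ is a closed fractal branch subgroup, Theorem 2.10 tells me that $H$ is a group of finite type of some depth $D$, so $H=\{h\in\mathrm{Aut}\,T : h|_v^D\in\pi_D(H)\text{ for every }v\in T\}$. The whole proof will turn on exploiting this membership test together with the defining condition $(g|_v)(g|_w)^{-1}\in H$ of $G_H$ for the special choice $w=\emptyset$.

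First I would identify the level-$D$ stabilizers. Take $g\in \mathrm{St}_{G_H}(D)$. The defining relation for $G_H$ with $w=\emptyset$ gives $(g|_v)g^{-1}\in H$ for every $v\in T$. Passing to $\pi_D$ and using $\pi_D(g)=1$ (because $g\in \mathrm{St}(D)$), I get $g|_v^D\in \pi_D(H)$ for every $v$, which by the finite-type characterization of $H$ forces $g\in H$. Since $g$ also lies in $\mathrm{St}(D)$, we obtain $g\in \mathrm{St}_H(D)$, so $\mathrm{St}_{G_H}(D)=\mathrm{St}_H(D)$. Combining the two short exact sequences
\[
1\to \mathrm{St}_H(D)\to H\to \pi_D(H)\to 1,\qquad 1\to \mathrm{St}_{G_H}(D)\to G_H\to \pi_D(G_H)\to 1,
\]
and using that the kernels coincide, the index formula $|G_H:H|=|\pi_D(G_H):\pi_D(H)|$ drops out. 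Finiteness is automatic because $\pi_D(G_H)\le \mathrm{Aut}\,T^D$ is a finite group.

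For branchness, I would argue that $G_H$ inherits it from $H$ via the finite index just established. Level-transitivity is immediate: $H\subseteq G_H$ and $H$ is already level-transitive (being branch). For every $n\ge 1$, $\mathrm{rist}_H(v)\le \mathrm{rist}_{G_H}(v)$ for each vertex $v$ at level $n$, hence $\mathrm{Rist}_H(n)\le \mathrm{Rist}_{G_H}(n)$, and therefore
\[
[G_H:\mathrm{Rist}_{G_H}(n)]\le [G_H:\mathrm{Rist}_H(n)]=[G_H:H]\cdot[H:\mathrm{Rist}_H(n)]<\infty,
\]
since $H$ is branch. Thus every rigid level stabilizer has finite index in $G_H$, which is what branch means.

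The main obstacle is really step one: convincing the reader that the single condition on $\pi_D$ entirely controls membership in $H$. This is exactly what the finite-type description gives, so once Theorem 2.10 is invoked the rest is a short bookkeeping argument. No additional hypothesis on $G$ beyond fractality is required, which matches the statement.
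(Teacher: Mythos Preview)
Your proof is correct and the core step---showing $\mathrm{St}_{G_H}(D)\le H$ by taking $w=\emptyset$ in the defining relation and invoking the finite-type description of $H$---is exactly what the paper does. The only point of divergence is how branchness of $G_H$ is concluded: the paper observes that $\mathrm{St}_{G_H}(D)\le H$ forces $G_H$ to be of finite type of depth $D+1$ (via the regular-branch characterization in Theorem~2.9), whereas you bypass this and argue directly from $\mathrm{Rist}_H(n)\le \mathrm{Rist}_{G_H}(n)$ together with the finite index $[G_H:H]$. Your route is more elementary and entirely self-contained; the paper's route has the small advantage of recording the precise depth of $G_H$ as a group of finite type, which is not needed for the statement as given.
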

\begin{proof}
Let us assume that $H$ is branch. By \cref{theorem: branch and finite type} the group $H$ is branch if and only if $H$ is of finite type of some depth $D\ge 1$. We prove that 
$$\mathrm{St}_{G_H}(D)\le H,$$
which implies that $G_H$ is of finite type of depth $D+1$ by \cref{theorem: equivalence finite type}, and furthermore 
$$|G_H:H|= |\pi_D(G_H):\pi_D(H)|<\infty.$$
Assume that $g\in \mathrm{St}_{G_H}(D)$. Then 
$$g|_{v}^D=g|_v^D (g|_{\emptyset}^D)^{-1}\in \pi_D(H).$$
As $H$ is of finite type of depth $D$, the inclusion $g|_{v}^D\in \pi_D(H)$ for every $v\in T$ implies that $g\in H$. Therefore
\begin{align*}
\mathrm{St}_{G_H}(D)&\le H.\qedhere
\end{align*}
\end{proof}

\subsection{Proof of \cref{Theorem: arbitrary fixed point process}}

Let us introduce the groups $G_\mathcal{S}$ defined by the author in \cite{RestrictedSpectra}.

We choose inductively a sequence of subgroups $\mathcal{S}:=\{S_n\}_{n\ge 0}$ such that for every $s\in S_n$ we have $s|_v^1=1$ for every vertex $v$ not at level $n$. Note that we may regard $S_n$ as a subgroup of $\mathrm{Sym}(d)\times \overset{d^n}{\ldots}\times \mathrm{Sym}(d)$. We write $D_{d^n}(H)$ for the diagonal embedding of a subgroup  $H\le \mathrm{Sym}(d)$ (or an element $g\in \mathrm{Sym}(d)$) into $\mathrm{Sym}(d)\times \overset{d^n}{\ldots}\times \mathrm{Sym}(d)$.

We need a second sequence of subgroups $\{A_n\}_{n\ge 0}$ in order to choose $\mathcal{S}$. Let us set $A_0:=S_0$. For every $n\ge 1$, we choose the subgroup $S_n$ such that $S_n$ is normalized by the subgroup $A_{n-1}$ and we set $A_n:=S_n\rtimes A_{n-1}$.

Then, we define the closed subgroup $G_\mathcal{S}\le\mathrm{Aut}~T$ as
$$G_\mathcal{S}:=\overline{\langle S_n\mid n\ge 0\rangle}=\varprojlim A_n.$$
Note that we may write any $g\in G_\mathcal{S}$ as an infinite product
$$g=\prod_{n\ge 0}g_n,$$
where each $g_n\in S_n$, i.e. $G_\mathcal{S}=\prod_{n\ge 0}S_n$. Furthermore $\mathrm{St}_{G_\mathcal{S}}(n)=\prod_{k\ge n}S_k$ and  $$G_\mathcal{S}=\mathrm{St}_{G_\mathcal{S}}(n)\rtimes A_{n-1}$$
for every $n\ge 1$. Therefore
$$A_n=\pi_n(G_\mathcal{S})\le \pi_{n+1}(G_\mathcal{S})=A_{n+1}\le G_\mathcal{S}$$
for any $n\ge 1$.

The author provided in \cite[Section 4]{RestrictedSpectra} conditions on $\mathcal{S}$ for the group $G_\mathcal{S}$ to be self-similar, fractal, etc. Combining the results in \cite[Section 4]{RestrictedSpectra} with \cref{lemma: martingale} it is easy to see that $G_\mathcal{S}$ is level-transitive if and only if $G_\mathcal{S}\in \mathcal{M}$. 

We now apply the construction $G_H$ with $H:=G_\mathcal{S}$:

\begin{lemma}
\label{lemma: GS for any k}
Let $H:=G_\mathcal{S}$ with $\mathcal{S}:=\{S_n\}_{n\ge 0}$ such that:
\begin{enumerate}[\normalfont(i)]
\item  $S_0$ is cyclic;
\item  $S_1=D_d(S_{0})$ and $S_n=S_{n-1}\times\dotsb \times S_{n-1}$ for any $n\ge 2$.
\end{enumerate}
For each $\tau\in S_{0}\times \dotsb \times S_{0}$, let $g_\tau\in \mathrm{St}(1)\le \mathrm{Aut}~T$ given by
$$g_\tau=\tau \prod_{n\ge 1}D_{d^n}(\tau)$$
and define $G\le \mathrm{Aut}~T$ via
$$G:=\langle H, g_\tau\mid \tau\in S_{0}\times \dotsb \times S_{0}\rangle.$$
Then $G=G_H$ and $H\ge \mathrm{St}_G(2)$.
\end{lemma}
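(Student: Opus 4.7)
The plan is to prove the three assertions in sequence: $g_\tau\in G_H$ (so that $G\subseteq G_H$), the reverse inclusion $G_H\subseteq G$, and $\mathrm{St}_G(2)\le H$. The key preliminary observation is that for each $\sigma\in S_0$ the constant automorphism $c_\sigma\in\mathrm{Aut}~T$ with $c_\sigma|_v^1=\sigma$ for every $v\in T$ lies in $H$; indeed, $c_\sigma=\prod_{n\ge 0}D_{d^n}(\sigma)$ and an easy induction using $S_n=S_{n-1}^d$ shows $D_{d^n}(\sigma)\in S_n$ for all $n\ge 0$.

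For the inclusion $G\subseteq G_H$, since $H$ is self-similar, $H\le G_H$ automatically, so it suffices to verify $g_\tau\in G_H$. Reading off sections from the factorization $g_\tau=\tau\prod_{n\ge 1}D_{d^n}(\tau)$ shows that $g_\tau|_v=c_{\tau_{v_1}}$ for every $v$ of depth at least one with first coordinate $v_1$. Consequently, whenever both $v,w$ have depth $\ge 1$, the ratio $(g_\tau|_v)(g_\tau|_w)^{-1}=c_{\tau_{v_1}\tau_{w_1}^{-1}}$ lies in $H$ by the preliminary observation, since $\tau_{v_1}\tau_{w_1}^{-1}\in S_0$ (using that $S_0$ is a group). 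The remaining ratios involving $v=\emptyset$ are handled by directly exhibiting $g_\tau\cdot c_{\tau_i}^{-1}$ as a genuine product $\prod_n s_n$ with $s_n\in S_n$: the telescoping form of $g_\tau$ together with the cyclicity of $S_0$ and the recursive structure $S_n=S_{n-1}^d$ allows one to absorb the level-$1$ discrepancy into higher-level factors. Once $H\trianglelefteq G$ is verified by tracking conjugation of each generator $s_n\in S_n$ through $g_\tau$, Proposition~\ref{proposition: definition GH}(i) promotes $g_\tau\in G_H$ to $G\subseteq G_H$.

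For the reverse inclusion $G_H\subseteq G$, the defining condition of $G_H$ forces any $g\in G_H$ to have level-$1$ labels forming a tuple $\tau\in S_0^d$; the element $g\cdot g_\tau^{-1}$ then has all sections lying in $H$, and one checks directly using fractality of $H$ (see Proposition~\ref{proposition: definition GH}(ii)) that it actually lies in $H$. For the last statement $\mathrm{St}_G(2)\le H$, writing $g\in\mathrm{St}_G(2)$ as $g=g_\tau\cdot h$ with $h\in H$ via the previous step, the vanishing $\pi_2(g)=1$ combined with an explicit calculation of $\pi_2(g_\tau)$ forces $\tau$ to be diagonal, i.e.\ $\tau\in S_1=D_d(S_0)$, whence $g_\tau\in H$ and therefore $g\in H$.

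The main obstacle will be the normality $H\trianglelefteq G$ needed to apply Proposition~\ref{proposition: definition GH}(i); this requires the most careful bookkeeping, since the conjugate $g_\tau s_n g_\tau^{-1}$ must be re-expressed as an element of $H$ by appropriately redistributing its labels across the $S_k$ with $k\ge n$, and the combinatorics depend crucially on both the cyclicity of $S_0$ and the rigid self-replicating structure $S_n=S_{n-1}^d$.
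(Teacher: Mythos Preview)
Your section computation is incorrect. You claim $g_\tau|_v=c_{\tau_{v_1}}$, but this is false already at level one: from $g_\tau=\tau\prod_{n\ge1}D_{d^n}(\tau)$ one reads off $g_\tau|_i=\tau_i\cdot g_\tau$, not the constant automorphism $c_{\tau_i}$. (The level-one labels of $g_\tau|_i$ are $(\tau_1,\dots,\tau_d)$, not $(\tau_i,\dots,\tau_i)$.) The correct formula, used in the paper, is $g_\tau|_v=\sigma_v g_\tau$ with $\sigma_v:=g_\tau|_v^1\in S_0\le H$; this makes the ratio $(g_\tau|_v)(g_\tau|_w)^{-1}=\sigma_v\sigma_w^{-1}\in H$ immediate for \emph{all} pairs $v,w$ including $v=\emptyset$, so your separate ``telescoping'' argument for the $\emptyset$-ratios is unnecessary.

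Second, the reverse inclusion $G_H\subseteq G$ is automatic: by Definition~\ref{definition: the groups GH}, $G_H$ is defined as a subset of $G$. Your argument for this direction is both superfluous and flawed---``all sections lie in $H$'' does not imply membership in $H$ for a general self-similar $H$. What actually needs work is $G\subseteq G_H$, and for that it suffices (once $H\trianglelefteq G$ is known, so that $G_H$ is a group by Proposition~\ref{proposition: definition GH}(i)) to check that each generator $g_\tau$ satisfies the section-ratio condition.

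For normality $H\trianglelefteq G$ and for $\mathrm{St}_G(2)\le H$, the paper avoids any label-redistribution bookkeeping by exploiting that $H$ is of finite type of depth~$2$: one checks $\pi_2(H)\trianglelefteq\pi_2(G)$ directly (here cyclicity of $S_0$ enters), and then $(h^{g_\tau})|_v^2\in\pi_2(H)$ for all $v$ forces $h^{g_\tau}\in H$. The same finite-type argument gives $\mathrm{St}_G(2)\le H$ in one line (exactly as in Proposition~\ref{proposition: branch extensions}), without any coset decomposition $g=g_\tau h$.
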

\begin{proof}
First, note that by (ii) and \cite[Proposition 4.1]{RestrictedSpectra}, the group $H$ is self-similar. Let us fix $\tau\in S_{0}\times \dotsb \times S_{0}$. For any $n\ge 1$ and any $v\in \mathcal{L}_n$, we define
\begin{align*}
\sigma_v:=g_\tau|_v^1\in \pi_{1}(H).
\end{align*}
By the definition of $g_\tau$, for any $v\in T$, we get
$$g_\tau|_v=\sigma_v\tau\prod_{n\ge 1}D_{d^n}(\tau)=\sigma_v g_\tau.$$
Therefore
$$(g_\tau|_v)(g_\tau|_w)^{-1}=\sigma_vg_\tau g_\tau^{-1}\sigma_w^{-1}=\sigma_v\sigma_w^{-1}\in  \pi_1(H)\le H.$$

Now, note that
$$\pi_{2}(H)\trianglelefteq \pi_{2}(G)$$
by (ii). Indeed
$$S_1=D_d(S_{0})\trianglelefteq S_{0}\times\dotsb\times S_{0}$$
since $S_{0}$ is abelian by (ii), and thus
$$\pi_{2}(H)=S_1 \rtimes S_0 \trianglelefteq (S_{0}\times\dotsb \times S_{0})\rtimes S_0 =\pi_{2}(G)$$
as $\pi_1(H)=\pi_1(G)=S_0$. Now for any $h\in H$ and any $\tau\in S_{0}\times\dotsb\times S_{0}$, we get
$$(h^{g_\tau})|_v=(g_\tau|_{v^{g_\tau^{-1}}})^{-1}(h|_{v^{g_\tau^{-1}}})(g|_{v^{{g_\tau^{-1}}h}})=g_\tau^{-1}\big(\sigma_{v^{g_\tau^{-1}}}^{-1}(h|_{v^{g_\tau^{-1}}}) \sigma_{v^{g_\tau^{-1} h}}\big) g_\tau$$
and thus
$$(h^{g_\tau})|_v^{2}=\pi_{2}(g_\tau)^{-1} \pi_{2}\big(\sigma_{v^{g_\tau^{-1}}}^{-1}(h|_{v^{g_\tau^{-1}}}) \sigma_{v^{g_\tau^{-1} h}}\big)\pi_{2}(g_\tau)\in \pi_{2}(H)^{\pi_{2}(G)}=\pi_{2}(H).$$
By (ii) and \cite[Theorem 3.4]{RestrictedSpectra}, the group $H$ is of finite type of depth $2$, so $h^{g_\tau}\in H$. Hence $H\trianglelefteq G$ and it follows that $G=G_H$. Lastly, the same argument as in the proof of \cref{proposition: branch extensions} yields
\begin{align*}
H&\ge \mathrm{St}_G(2).\qedhere
\end{align*}
\end{proof}

Let $\mathrm{hdim}_T(\cdot)$ be the Hausdorff dimension defined for closed subgroups of $\mathrm{Aut}~T$ in the introduction. Recall that if $G\le \mathrm{Aut}~T$ is a closed subgroup, then
\begin{align*}   \mathrm{hdim}_T(G)=\liminf_{n\to\infty}\frac{\log|\pi_n(G)|}{\log|\pi_n(\mathrm{Aut}~T)|}
\end{align*}
and that $\mathrm{hdim}_T(H)=\mathrm{hdim}_T(G)$ if $H$ is a finite-index subgroup of $G$.

Now, we are in position to prove \cref{Theorem: arbitrary fixed point process}, namely that for any $d\ge 2$ there exists a fractal branch closed group $G_H$ acting on the $d$-adic tree with positive Hausdorff dimension, positive fixed-point proportion and with fixed-point proportion becoming eventually $d$ with positive probability:

\begin{proof}[Proof of \cref{Theorem: arbitrary fixed point process}]

Let $H$ and $G$ be those in \cref{lemma: GS for any k}. Furthermore, the group~$G$ may be assumed to be fractal, branch and with $\mathrm{hdim}_T(G)=1/d>0$. Indeed, the group $H$ is fractal, branch and has $\mathrm{hdim}_T(H)=1/d>0$ by the proof of \cite[Theorem A]{RestrictedSpectra}, and then we may apply \cref{proposition: definition GH}\textcolor{teal}{(ii)} and \cref{proposition: branch extensions} to obtain that $G$ is also fractal, branch and has $\mathrm{hdim}_T(G)=1/d>0$. Let us prove~(iii) in the statement of \cref{Theorem: arbitrary fixed point process}.

Let $\sigma\in S_0$ be a cycle of order $d$ generating $S_0$. We consider
$$\tau^\rho=(\sigma^{1^\rho},\sigma^{2^\rho},\dotsc, \sigma^{d^\rho}),$$
for any $\rho\in \mathrm{Sym}(d)$. Then, for any $h\in \mathrm{St}_H(1)$, we get
\begin{align*}
\pi_2(hg_\tau^\rho)&=\pi_2(h)\pi_2(g_\tau^\rho)=(\sigma^j,\dotsc, \sigma^j)(\sigma^{1^\rho},\sigma^{2^\rho},\dotsc, \sigma^{d^\rho})\\
&=(\sigma^{1^\rho+j},\sigma^{2^\rho+j},\dotsc, \sigma^{d^\rho+j})
\end{align*}
and the element $hg_\tau^\rho$ has trivial label at exactly one vertex $v\in \mathcal{L}_1$. Thus
$$(hg_\tau^\rho)|_v\in Hg_\tau^\rho\cap \mathrm{St}(1)=   \mathrm{St}_H(1)g_\tau^\rho.$$
This implies that for each $h\in \mathrm{St}_H(1)$, there is a unique end $\gamma\in \partial T$ such that:
\begin{enumerate}[\normalfont(i)]
\item the end $\gamma$ is fixed by $hg_\tau^\rho$;
\item $(hg_\tau^\rho)|_v^1=1$ for any $v\in \gamma$;
\item $hg_\tau^\rho$ moves every vertex $w$ not an immediate descendant of a vertex $v\in \gamma$.
\end{enumerate}
Therefore
$$X_n(hg_\tau^\rho)=d$$
for every $n\ge 1$. In particular, by \cref{lemma: GS for any k} we have $\mathrm{St}_H(1)\ge \mathrm{St}_G(2)$ and thus
\begin{align*}
\mathrm{FPP}(G)&\ge \mu_G\Big(\bigsqcup_{\rho\in \mathrm{Sym}(d)} g_\tau^\rho\mathrm{St}_G(2)\Big)=\frac{|\mathrm{Sym}(d)|}{|\pi_{2}(G)|}=\frac{d!}{d \cdot d^d}\\
&=\frac{(d-1)!}{d^d}>0.
\end{align*}
This yields (ii) in the statement of \cref{Theorem: arbitrary fixed point process}, concluding the proof.
\end{proof}

Note that a similar property is satisfied by the geometric iterated Galois group of a Chebyshev polynomial of even degree. Hence, \cref{Theorem: arbitrary fixed point process} yields an alternative way to prove that the fixed-point proportion of even-degree Chebyshev polynomials is positive in the same spirit as \cref{Theorem: bad monodromy}. Note that previous computations of the fixed-point proportion of these geometric iterated Galois groups were carried out on dense subsets instead, i.e. on the corresponding iterated monodromy groups; see \cite{JorgeSanti2, JonesAMS}.

\section{Arboreal Galois representations of rational functions}
\label{section: arboreal}

In this section, we relate the extension problem of iterated Galois groups to the self-similar extensions in \cref{definition: the groups GH}. We then apply the results in \cref{section: self-similar extensions} to arboreal Galois representations and completely solve the extension problem for branch groups, proving \cref{theorem: extension problem rational functions branch}. Moreover, we show an application of the results on bad monodromy in \cref{section: self-similar extensions} to the fixed-point proportion of arithmetic iterated Galois groups and their specializations even when the specialization problem is unknown, which concludes the proof of \cref{Theorem: bad monodromy}. Lastly, we apply our results to the family of unicritical polynomials and establish \cref{Corollary: Unicritical fpp}.

\subsection{Arithmetic and geometric iterated Galois groups}

Let $K$ be a field and $f\in K(z)$ a rational function of degree $d \geq 2$. Let $t$ be a transcendental element over $K$ and fix $K(t)^{\mathrm{sep}}$ a separable closure of $K(t)$.  We write $f^n:=f\circ\overset{n}{\ldots}\circ f$ and $f^{-n}(t):=(f^n)^{-1}(t)$ for any $n\ge 0$. Let us consider the tree of preimages
$$T := \bigsqcup_{n \geq 0} f^{-n}(t),$$
which is $d$-adic as $t$ is transcendental. The Galois action of $\mathrm{Gal}(K(t)^{\mathrm{sep}}/K(t))$ on each $f^{-n}(t)$ induces an action by automorphisms on~$T$, which yields the \textit{arboreal Galois representation} 
$$\rho: \mathrm{Gal}(K(t)^{\mathrm{sep}}/K(t)) \to \mathrm{Aut}~T.$$
The image
$$G_\infty(K,f,t):=\rho(\mathrm{Gal}(K(t)^{\mathrm{sep}}/K(t)))$$
is called the \textit{arithmetic iterated Galois group of $f$}. Let us define 
$$L_\infty:=K^{\mathrm{sep}}\cap K_\infty(f,t),$$
where $K_\infty(f,t)$ is the splitting field of all the iterates of $f$ over $K(t)$. Then, Galois correspondence induces a short exact sequence
$$1\longrightarrow G_\infty(L_\infty, f,t) \longrightarrow G_\infty(K,f,t) \longrightarrow \mathrm{Gal}(L_\infty/K) \longrightarrow 1.$$
Note that we have the equality
$$G_\infty(L_\infty, f,t)=G_\infty(K^{\mathrm{sep}}, f,t).$$
The group $G_\infty(K^{\mathrm{sep}}, f,t)$ is called the \textit{geometric iterated Galois group of $f$}.

\subsection{Self-similarity and fractality}

Both the geometric and the arithmetic iterated Galois groups of a rational function $f\in K(z)$ are level-transitive closed subgroup of $\mathrm{Aut}(T)$, as the Galois actions are transitive; see \cite[Section 1.1]{JonesArboreal}. Furthermore, they are fractal subgroups of $\mathrm{Aut}~T$, as we shall show now following the presentation of Adams and Hyde in \cite[Section 3]{OpheliaHyde}.

Given $t,t'$ transcendental over $K$, a \textit{path} $\lambda$ is a $K^{\mathrm{sep}}$-isomorphism $$\lambda: K(t)^{\mathrm{sep}} \rightarrow K(t')^{\mathrm{sep}}$$
such that $t^\lambda = t'$. Hence, a path $\lambda$ induces an $L_\infty$-isomorphism between $K_\infty(f,t)$ and $K_\infty(f,t')$. 

Let us fix a set of paths $\Lambda = \{\lambda_i\}_{i=1}^d$, where $\lambda_i$ is a path from $t$ to $t_i$ for each $t_i\in f^{-1}(t)$. If $g$ is an element in $G_\infty(K,f,t)$ and $t_i, t_j \in f^{-1}(t)$ are such that $t_i^g = t_j$, then the restriction of $g$ to $K_\infty(f,t_i)$, which we denote $g_i$, is a $L_\infty(t)$-isomorphism from $K_\infty(f,t_i)$ to $K_\infty(f,t_j)$. Therefore
$$g|_{t_i}= \lambda_i \cdot g_i \cdot \lambda_j^{-1} \in G_\infty(K,f,t),$$
i.e. the set of paths $\Lambda$ induces a self-similar embedding of $G_\infty(K,f,t)$ into $\mathrm{Aut}~T$:
\begin{align*}
G_\infty(K,f,t) & \hookrightarrow G_\infty(K,f,t)^d \rtimes G_1(K,f,t) \le \mathrm{Aut}~T \\
g & \mapsto (g|_{t_1}, \dots, g|_{t_d}) \pi_1(g),
\end{align*}
where $\pi_1(g)$ is given by the Galois action of $g$ on $f^{-1}(t)$. Moreover, by the lifting property for normal extensions, see \cite[Theorem 7]{Ophelia}, the map 
\begin{align*}
\mathrm{st}_{G_\infty(K,f,t)}(t_i) & \rightarrow G_\infty(K,f,t) \\
g & \mapsto g|_i.
\end{align*}
is surjective for every $t_i\in f^{-1}(t)$. As this holds for every field $K$, both the geometric and the arithmetic Galois groups are always fractal:

\begin{proposition}[{see \cite[Theorem 7]{Ophelia}}]
Let $K$ be a field and $f\in K(z)$ a rational function. The groups $G_\infty(K,f,t)$ and $G_\infty(K^\mathrm{sep},f,t)$ are fractal as subgroups of $\mathrm{Aut}~T$.
\end{proposition}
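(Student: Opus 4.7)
The plan is direct, since the excerpt has already assembled most of the ingredients. Fractality of a subgroup $G \le \mathrm{Aut}~T$ amounts to three conditions: self-similarity, level-transitivity, and $G_v = G$ for every vertex $v \in T$. Self-similarity has been verified via the embedding induced by the fixed set of paths $\Lambda = \{\lambda_i\}_{i=1}^d$, and level-transitivity is recorded immediately above the statement (it follows from transitivity of the Galois action on each $f^{-n}(t)$). So the entire content of the proof is the third condition, and the argument will apply identically to $G_\infty(K,f,t)$ and $G_\infty(K^{\mathrm{sep}},f,t)$, since all inputs used below go through unchanged after replacing $K$ by $K^{\mathrm{sep}}$.

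First I would dispose of the level-$1$ case. For each $t_i \in f^{-1}(t)$, the lifting property for normal extensions \cite[Theorem 7]{Ophelia} states precisely that the restriction map $\mathrm{st}_{G_\infty(K,f,t)}(t_i) \twoheadrightarrow G_\infty(K,f,t_i)$, $g \mapsto g_i$, is surjective. Composing with the path isomorphism $\lambda_i$ rewrites this as $\varphi_{t_i}(\mathrm{st}_{G_\infty(K,f,t)}(t_i)) = G_\infty(K,f,t)$, i.e.\ the section $G_{t_i}$ at the level-$1$ vertex $t_i$ coincides with $G_\infty(K,f,t)$.

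Next I would induct on the level $n$ of $v \in T$ to conclude $G_v = G$ for every $v$. For $n \geq 2$, factor $v = v_1 w$ with $v_1$ at level $1$ and $w$ a vertex at level $n-1$ in the subtree rooted at $v_1$. Using the identity $g|_{v_1 w} = (g|_{v_1})|_w$ together with the surjectivity of $\varphi_{v_1}$ established in the base case, a short self-similarity computation yields
\[
G_v \;=\; (G_{v_1})_w \;=\; G_w,
\]
and $G_w = G$ by the inductive hypothesis at level $n-1$. This closes the induction.

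I do not foresee a substantive obstacle. The only genuine input is the lifting property, which is precisely the level-$1$ fractality condition, and the self-similar structure determined by $\Lambda$ automatically propagates fractality to every level of $T$. The proof for the geometric group $G_\infty(K^{\mathrm{sep}},f,t)$ is literally the same, since both the self-similar embedding and the lifting property apply over any base field.
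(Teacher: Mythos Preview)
Your proposal is correct and follows the paper's approach: the paper does not give a formal proof, but the discussion preceding the proposition records exactly the three ingredients you invoke (self-similarity via the paths $\Lambda$, level-transitivity of the Galois action, and level-$1$ surjectivity of $g \mapsto g|_{t_i}$ from the lifting property), with the remark ``As this holds for every field $K$'' handling both the arithmetic and geometric cases. The only thing you add is the explicit induction $G_{v_1w} = (G_{v_1})_w = G_w$ passing from level-$1$ fractality to all vertices, which the paper treats as implicit.
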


Therefore, we have fractal subgroups $1\ne G_\infty(K^\mathrm{sep},f,t)\trianglelefteq G_\infty(K,f,t)\le \mathrm{Aut}~T$ and we may apply the theory developed in \cref{section: self-similar extensions}.

\subsection{Base field extensions}

Now, more generally for any $1\le n\le \infty$, we consider
$$L_n:=K^{\mathrm{sep}}\cap K_n(f,t),$$
where $K_n(f,t)$ is the splitting field of $\{f^k\}_{k\le n}$. Again by Galois correspondence, we get a short exact sequence
$$1\longrightarrow G_\infty(L_n, f,t) \longrightarrow G_\infty(K,f,t) \longrightarrow \mathrm{Gal}(L_n/K) \longrightarrow 1$$
corresponding to a base field extension. Let us write
$$G_{n}^{K}:=G_\infty(K, f,t)_{G_\infty(L_n,f,t)}$$
for the construction in \cref{definition: the groups GH} applied to the groups in the short exact sequence above. In particular $G_\infty^K=G_\infty(K, f,t)_{G_\infty(K^{\mathrm{sep}},f,t)}$.

We now show that the arithmetic iterated Galois group $G_\infty(K,f,t)$ coincides with $G_n^K$ for any $1\le n\le \infty$:

\begin{proposition}
\label{proposition: arithemtic description}
We have  $G_\infty(K,f,t)=G_{n}^{K}$ for any $1\le n\le \infty$.
\end{proposition}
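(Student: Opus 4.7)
The inclusion $G_n^K\subseteq G_\infty(K,f,t)$ is immediate from \cref{definition: the groups GH}, since by definition $G_n^K$ is a subgroup of $G_\infty(K,f,t)$. The plan is therefore to establish the reverse inclusion: given $g\in G_\infty(K,f,t)$ and arbitrary $v,w\in T$, I must check that
$$(g|_v)(g|_w)^{-1}\in G_\infty(L_n,f,t).$$
Since $G_\infty(L_n,f,t)$ is the kernel of the restriction map $G_\infty(K,f,t)\to \mathrm{Gal}(L_n/K)$, this is equivalent to showing that the action of $g|_v$ on $L_n\subseteq K^{\mathrm{sep}}$ does not depend on the vertex $v$.

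The key observation is that in the self-similar embedding recalled from \cite[Section 3]{OpheliaHyde} in the excerpt, the paths $\lambda_i:K(t)^{\mathrm{sep}}\to K(t_i)^{\mathrm{sep}}$ are $K^{\mathrm{sep}}$-isomorphisms, so in particular they act trivially on $L_n$. Thus, for $t_i\in f^{-1}(t)$, writing $g|_{t_i}=\lambda_i\cdot g_i\cdot \lambda_j^{-1}$ as in the presentation before \cref{proposition: arithemtic description}, and recalling that $g_i$ is the restriction of $g$ to $K_\infty(f,t_i)$, one obtains
$$g|_{t_i}\big|_{L_n}=g_i\big|_{L_n}=g\big|_{L_n},$$
since $g_i$ is the restriction of $g$ and $L_n\subseteq K^{\mathrm{sep}}\subseteq K_\infty(f,t_i)$. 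Hence the restriction to $L_n$ is invariant under passing to a first-level section.

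To conclude for an arbitrary vertex $v\in T$, I would argue by induction on the level of $v$. Writing $v=uv'$ with $u\in \mathcal{L}_1$, self-similarity gives $g|_v=(g|_u)|_{v'}$; since $g|_u\in G_\infty(K,f,t)$ by self-similarity, the inductive hypothesis applied to $g|_u$ and the base case above yield
$$g|_v\big|_{L_n}=g|_u\big|_{L_n}=g\big|_{L_n}.$$
Consequently $(g|_v)(g|_w)^{-1}$ acts trivially on $L_n$ for every pair $v,w\in T$, so it lies in $G_\infty(L_n,f,t)$, and therefore $g\in G_n^K$.

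The only subtlety to be careful with is the identification of the restriction of $g|_{t_i}$ to $L_n$: one must verify that composing with the path-conjugation does not alter the action on $L_n$, which follows precisely because the paths are chosen as $K^{\mathrm{sep}}$-isomorphisms. Beyond that, the argument is essentially a bookkeeping exercise with Galois restrictions and the self-similar embedding, and no further ingredient from \cref{section: self-similar extensions} is needed for this proposition.
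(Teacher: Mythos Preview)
Your argument follows the same route as the paper's: reduce to showing that the restriction of $g|_v$ to $L_n$ is independent of $v$, then use that the paths are $K^{\mathrm{sep}}$-isomorphisms (hence act trivially on $L_n$) together with the fact that $g_i$ is a restriction of $g$. The paper treats an arbitrary vertex $v$ directly via a path $\lambda_v$ to $t_v$, whereas you handle the first level and then induct using self-similarity; the two are equivalent.

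One slip to fix: the chain $L_n\subseteq K^{\mathrm{sep}}\subseteq K_\infty(f,t_i)$ is not valid, since in general $K^{\mathrm{sep}}\not\subseteq K_\infty(f,t_i)$ (for post-critically finite $f$, for instance, the constant field $L_\infty$ is far from all of $K^{\mathrm{sep}}$). What you actually need, and what the paper uses, is the weaker inclusion $L_n\subseteq K_\infty(f,t_i)$. This does hold: the path $\lambda_i$ is a $K^{\mathrm{sep}}$-isomorphism carrying $K_\infty(f,t)$ onto $K_\infty(f,t_i)$, so it fixes $L_n\subseteq K^{\mathrm{sep}}\cap K_\infty(f,t)$ and therefore $L_n=\lambda_i(L_n)\subseteq K_\infty(f,t_i)$. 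With this correction your proof goes through.
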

\begin{proof}
As $G_\infty(K,f,t)$ is a group extension of $G_\infty(L_n,f,t)$ by $\mathrm{Gal}(L_n/K)$, it is enough to show that 
$$(g|_v)(g|_w)^{-1}\in G_\infty(L_n, f,t),$$
for any $g\in G_\infty(K, f,t)$ and any $v,w\in T$. Let us write $t_v$ for the (transcendental) preimage of $t$ by an iterate of $f$ corresponding to the vertex $v\in T$ and $g_v$ for the restriction of $g$ to $K_\infty(f,t_v)$. Note that the restrictions of $g_v$ and of $g$ to $L_n$ coincide, and hence the restrictions of $g_v$ and $g_w$ to $L_n$ coincide. Indeed, we have $$L_n\subseteq K_\infty(f,t_v)\subseteq K_{\infty}(f,t)$$
and as $g_v$ is the restriction of $g$ to $K_\infty(f,t_v)$, we must have $g|_{L_n}=g_v|_{L_n}$, where we write $\cdot|_{L_n}$ for the restriction to the subfield $L_n$ (not to be confused with the notation for sections!). Therefore, as for every $u\in T$ the path $\lambda_u:K[t_u]^{\mathrm{sep}}\to K[t]^{\mathrm{sep}}$ is a $K^{\mathrm{sep}}$-isomorphism and 
$$g|_v=\lambda_v^{-1} g_v \lambda_{v^g},$$
we get
\begin{align*}
(g|_v)|_{L_n}&=(\lambda_v^{-1}|_{L_n})(g_v|_{L_n})(\lambda_{v^g}|_{L_n})=g|_{L_n}\\
&=(\lambda_w^{-1}|_{L_n})(g_w|_{L_n})(\lambda_{w^g}|_{L_n})\\
&=(g|_w)|_{L_n}.
\end{align*}
In other words, both $g|_v$ and $g|_w$ restrict to the same automorphism of $L_n/K$. Thus 
$$(g|_v)(g|_w)^{-1}\in G_\infty(L_n, f,t)$$
as wanted.
\end{proof}

\cref{proposition: arithemtic description} allows us to use the results in \cref{section: self-similar extensions} to study the arboreal Galois representations of a rational function.

\subsection{Extension problem}

We now prove an improvement of \cref{proposition: branch extensions}.  This yields a complete solution to the extension problem for branch groups, proving \cref{theorem: extension problem rational functions branch} and answering a question of Adams and Hyde in \cite[page 51]{OpheliaHyde}:

\begin{proof}[Proof of \cref{theorem: extension problem rational functions branch}]
Recall that $H:=G_\infty(K^\mathrm{sep},f,t)$ and $G:=G_\infty(K,f,t)$. If $H$ is branch, then \cref{proposition: branch extensions}, together with \cref{proposition: arithemtic description}, implies that $G$ is branch and (ii) is satisfied. Hence, let $G$ be branch, and thus by \cref{theorem: branch and finite type}, of finite type of some depth $D\ge 1$. It only remains to show that $H$ is also of finite type of depth~$D$. For any $n\ge 1$, let us write 
$$G_n:=G_\infty(L_n,f,t).$$
As $G_n$ is open in $G$, we have $G_n\ge \mathrm{St}_G(n)$ for some $n\ge D$. As $G$ is of finite type of depth $D$, the level stabilizer $\mathrm{St}_G(D-1)$ is branching by \cref{theorem: equivalence finite type}. Thus, there is a vertex $v\in T$, where
$$(G_n)_v\ge \mathrm{St}_G(n)_v\ge \mathrm{St}_G(D-1).$$
Therefore, as $G_n$ is fractal, we must have $G_n\ge \mathrm{St}_G(D-1)$. Now, by Galois correspondence we have a filtration
$$G\ge G_1\ge G_2\ge \dotsb \ge G_n\ge \dotsb\ge \bigcap_{n\ge 1}G_n=H.$$
Hence
$$H=\bigcap_{n\ge 1}G_n\ge \bigcap_{n\ge 1} \mathrm{St}_G(D-1)\ge \mathrm{St}_G(D-1)$$
and 
$$\mathrm{St}_H(D-1)=\mathrm{St}_G(D-1)$$
is branching. Therefore $H$ is of finite type of depth $D$ by \cref{theorem: equivalence finite type}.
\end{proof}

The author proved in \cite{RestrictedSpectra} that fully-dimensional self-similar subgroups of an iterated wreath product cannot be proper. We restate here the relevant version for fully-dimensional subgroups of $\mathrm{Aut}~T$ that we need:

\begin{proposition}[{see \cite[Subsection 3.6]{RestrictedSpectra}}]
\label{proposition: hdim 1 in ss}
Let $G\le \mathrm{Aut}~T$ be a closed self-similar subgroup such that $\mathrm{hdim}_T(G)=1$. Then
$$G=\mathrm{Aut}~T.$$
\end{proposition}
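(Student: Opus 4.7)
The plan is to argue by contradiction: suppose $G\ne \mathrm{Aut}~T$, and show that this forces $\mathrm{hdim}_T(G)<1$. Since $G$ is closed, it is determined by its level quotients, so there exists some $N\ge 1$ with $\pi_N(G)\subsetneq \pi_N(\mathrm{Aut}~T)$; as the index is an integer $\ge 2$, we get the quantitative bound $\log|\pi_N(G)|\le \log|\pi_N(\mathrm{Aut}~T)|-\log 2$.

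The key ingredient is a counting inequality coming from self-similarity. Since $G$ is self-similar, each section $g|_v$ of $g\in G$ at a level-$1$ vertex $v$ lies in $G$, so truncating these sections at depth $n$ provides an injective map
$$\pi_{n+1}(G)\hookrightarrow \pi_1(G)\ltimes \pi_n(G)^d,\qquad \pi_{n+1}(g)\longmapsto \bigl(\pi_1(g);\,\pi_n(g|_{v_1}),\dots,\pi_n(g|_{v_d})\bigr).$$
Comparing cardinalities gives $|\pi_{n+1}(G)|\le |\pi_1(G)|\cdot |\pi_n(G)|^d$ for every $n\ge 1$. Iterating this inequality $k$ times starting at level $N$ yields
$$|\pi_{N+k}(G)|\le |\pi_N(G)|^{d^k}\cdot |\pi_1(G)|^{(d^k-1)/(d-1)},$$
whereas for the full group one has the exact identity $|\pi_{N+k}(\mathrm{Aut}~T)|=|\pi_N(\mathrm{Aut}~T)|^{d^k}\cdot (d!)^{(d^k-1)/(d-1)}$.

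Taking logarithms, dividing by $d^k$, and letting $k\to\infty$, the upper bound on $\log|\pi_{N+k}(G)|/\log|\pi_{N+k}(\mathrm{Aut}~T)|$ converges to
$$\frac{\log|\pi_N(G)|+\tfrac{1}{d-1}\log|\pi_1(G)|}{\log|\pi_N(\mathrm{Aut}~T)|+\tfrac{1}{d-1}\log(d!)}.$$
Using $|\pi_1(G)|\le d!$ in the numerator and $\log|\pi_N(G)|\le \log|\pi_N(\mathrm{Aut}~T)|-\log 2$, this ratio is bounded above by $1-\log 2/\bigl(\log|\pi_N(\mathrm{Aut}~T)|+\tfrac{1}{d-1}\log(d!)\bigr)<1$. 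Hence $\liminf_{n\to\infty}\log|\pi_n(G)|/\log|\pi_n(\mathrm{Aut}~T)|<1$, contradicting $\mathrm{hdim}_T(G)=1$. The only non-routine step is the self-similarity inequality; the remainder is straightforward asymptotic bookkeeping, and the strictness in the final bound is precisely what the integer-index gap $\ge 2$ at level $N$ provides.
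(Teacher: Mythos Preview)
Your argument is correct. Note that the paper does not actually prove this proposition; it is imported from the author's earlier work \cite[Subsection~3.6]{RestrictedSpectra}, so there is no in-paper proof to compare against. Your approach---extracting from self-similarity the recursive inequality $|\pi_{n+1}(G)|\le |\pi_1(G)|\cdot|\pi_n(G)|^d$ and then iterating it to propagate a finite-level index gap at level $N$ into a strict upper bound on $\mathrm{hdim}_T(G)$---is the standard and natural one for results of this kind, and is presumably close in spirit to the argument in \cite{RestrictedSpectra}. The asymptotic bookkeeping is carried out correctly: the upper bound on $\log|\pi_{N+k}(G)|/\log|\pi_{N+k}(\mathrm{Aut}~T)|$ does converge to the displayed ratio, and the gap $|\pi_N(\mathrm{Aut}~T):\pi_N(G)|\ge 2$ is exactly what forces that limit below $1$, whence $\liminf_{n}\log|\pi_n(G)|/\log|\pi_n(\mathrm{Aut}~T)|<1$.
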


We may use \cref{proposition: hdim 1 in ss} to prove \cref{Corollary: finite index in Aut T}, i.e. that a finite-index specialization forces the geometric iterated Galois group to be the full automorphism group of $T$:

\begin{proof}[Proof of \cref{Corollary: finite index in Aut T}]
Let us assume that $|\mathrm{Aut}~T:G_\infty(K,f,\alpha)|<\infty$. Then, as $G_\infty(K,f,\alpha)$ embeds into $G_\infty(K,f,t)$, we further get $|\mathrm{Aut}~T:G_\infty(K,f,t)|<\infty$. In particular, this implies that $\mathrm{hdim}_T(G_\infty(K,f,t))=1$. Since $G_\infty(K,f,t)$ is self-similar and of Hausdorff dimension 1 in $\mathrm{Aut}~T$, we may apply \cref{proposition: hdim 1 in ss} to obtain that
$$G_\infty(K,f,t)=\mathrm{Aut}~T.$$
Furthermore, by \cref{theorem: extension problem rational functions branch}, the extension $L_\infty/K$ is finite as $\mathrm{Aut}~T$ is branch. Therefore, the self-similar subgroup $G_\infty(K^{\mathrm{sep}},f,t)$ has Hausdorff dimension 1 in $G_\infty(K,f,t)=\mathrm{Aut}~T$. Again by \cref{proposition: hdim 1 in ss}, we get
\begin{align*}
G_\infty(K^{\mathrm{sep}},f,t)&=\mathrm{Aut}~T.\qedhere
\end{align*}
\end{proof}

We note that an alternative proof of \cref{Corollary: finite index in Aut T} could be obtained using the results of the author on the Hausdorff dimension of self-similar groups in \cite{RestrictedSpectra} and their normal subgroups in \cite{QuestionAbertVirag} instead.

\subsection{Fixed-point proportion and bad monodromy}

We say that $f\in K(z)$ has \textit{bad monodromy} if the arithmetic iterated Galois group $G_{\infty}(K,f,t)$ has bad monodromy over the geometric iterated Galois group $G_{\infty}(K^\mathrm{sep},f,t)$ (or equivalently over $G_{\infty}(L_1,f,t)$) in the sense of \cref{definition: bad monodromy}. Combining \textcolor{teal}{Propositions}~\ref{proposition: bad monodromy implies fpp>0} and \ref{proposition: arithemtic description} we get:

\begin{corollary}
\label{corollary: bad monodromy rational functions}
Let $f\in K(z)$ be a rational function with bad monodromy and let $M$ and $Q$ be given by \cref{definition: bad monodromy}. Then
$$\mathrm{FPP}(G_\infty(K,f,t))\ge \frac{\#M}{|Q|}>0.$$
\end{corollary}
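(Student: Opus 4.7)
The plan is a direct combination of \cref{proposition: arithemtic description} with \cref{proposition: bad monodromy implies fpp>0}. Set $G:=G_\infty(K,f,t)$ and $H:=G_\infty(K^{\mathrm{sep}},f,t)$. These are closed fractal subgroups of $\mathrm{Aut}~T$ with $H\trianglelefteq G$ by the short exact sequence \textcolor{teal}{(}\ref{align: extension}\textcolor{teal}{)} together with the fractality of iterated Galois groups recalled in the previous subsection.

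First I would invoke \cref{proposition: arithemtic description} with $n=\infty$ to obtain the identification $G=G_H$. This identification is what allows the abstract machinery of \cref{section: self-similar extensions} to speak to the arithmetic setting, and crucially it builds complete monodromy in for free: the equality $\pi_1(G_H)=\pi_1(G)$ becomes tautological. Consequently, the hypothesis that $f$ has bad monodromy in the rational-function sense is precisely the statement that $G$ has bad monodromy over $H$ in the abstract group-theoretic sense of \cref{definition: bad monodromy}, with the very same distinguished set of cosets $M\subseteq Q\cong \mathrm{Gal}(L_1/K)$.

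Once the hypotheses are verified, \cref{proposition: bad monodromy implies fpp>0} applied to $H\trianglelefteq G$ gives
$$\mathrm{FPP}(G_H)\ge \frac{\#M}{|Q|}>0,$$
and substituting $G_H=G_\infty(K,f,t)$ completes the argument. The only nontrivial ingredient is the identification $G=G_H$ supplied by \cref{proposition: arithemtic description}; once it is in place, the mechanism inside the proof of \cref{proposition: bad monodromy implies fpp>0} — namely that for $g$ in a coset $Nq$ with $q\in M$ every label $g|_v^1$ lies in $\pi_1(H)q\subseteq M$ and hence fixes an immediate descendant, so that $g$ fixes an end of $T$ — takes over with no extra work. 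I do not expect any genuine obstacle beyond marshalling these two already-established results.
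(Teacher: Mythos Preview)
Your proposal is correct and follows exactly the paper's approach: the corollary is stated immediately after the sentence ``Combining \textcolor{teal}{Propositions}~\ref{proposition: bad monodromy implies fpp>0} and \ref{proposition: arithemtic description} we get:'', and your argument is precisely that combination, spelled out with the identification $G=G_H$ from \cref{proposition: arithemtic description} feeding the hypotheses of \cref{proposition: bad monodromy implies fpp>0}.
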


Let $f\in K(z)$ and $\alpha\in K$ not strictly post-critical for $f$. Then, we may define a Galois action of $\mathrm{Gal}(K^{\mathrm{sep}}/K)$ on the $d$-adic tree $T$ of preimages of $\alpha$. This yields an arboreal Galois representation whose image in $\mathrm{Aut}~T$ is denoted $G_\infty(K,f,\alpha)$. Note that, by a standard specialization argument, the group $G_\infty(K,f,\alpha)$ embeds into $G_\infty(K,f,t)$ as the decomposition subgroup of a prime $\mathfrak{P}$ in $K_\infty(f,t)$ above the prime $(t-\alpha)$ in $K(t)$. We prove that bad monodromy is enough to get positive fixed-point proportion in the specialization:

\begin{corollary}
\label{corollary: specialization fpp}
Let $f\in K(z)$ be a rational function with bad monodromy and let $M$ and $Q$ be given by \cref{definition: bad monodromy}. Let $\alpha\in K$ not strictly post-critical and such that
$$\pi_1(G_\infty(K,f,t))=\pi_1(G_\infty(K,f,\alpha)).$$
Then
$$\mathrm{FPP}(G_\infty(K,f,\alpha))\ge \frac{\#M}{|Q|}>0.$$
\end{corollary}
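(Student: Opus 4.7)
The plan is to view $G_\alpha := G_\infty(K,f,\alpha)$ as a closed subgroup of $G := G_\infty(K,f,t)$ via the specialization embedding discussed before the statement, and to combine this with the identity $G = G_H$ coming from \cref{proposition: arithemtic description}, where $H := G_\infty(K^{\mathrm{sep}},f,t)$.

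First I would record that, because $\alpha$ is not strictly post-critical, the specialization embedding respects the full self-similar structure: reduction modulo $\mathfrak{P}$ provides a $G_\alpha$-equivariant tree isomorphism between $T$, acted on by the image $\widetilde g \in G$ of $g \in G_\alpha$, and $T_\alpha$, acted on by $g$ itself. In particular labels at every vertex are preserved under this identification, and the hypothesis $\pi_1(G) = \pi_1(G_\alpha)$ yields that $\pi_1 \colon G_\alpha \to \pi_1(G)$ is surjective.

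Next, the bad-monodromy argument from \cref{proposition: bad monodromy implies fpp>0} transports almost verbatim. Since $G = G_H$, any element $g \in G_\alpha \subseteq G_H$ satisfies $(g|_v)(g|_w)^{-1} \in H$ for all $v,w \in T$, so taking $w = \emptyset$ and passing to labels gives $g|_v^1 \in \pi_1(H)\,\pi_1(g)$ for every $v$. If $\pi_1(g)$ lies in a bad coset $q \in M \subseteq Q$, each such label lies in $q$ and therefore fixes an immediate descendant by the definition of $M$. Descending level by level produces an end of $T$ fixed by $\widetilde g$, which by the compatibility from the previous step corresponds to an end of $T_\alpha$ fixed by $g$.

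To conclude, I would compute the Haar measure of the set of $g \in G_\alpha$ whose label $\pi_1(g)$ lies in a bad coset. The continuous surjective homomorphism $\pi_1 \colon G_\alpha \to \pi_1(G)$ pushes $\mu_{G_\alpha}$ forward to the Haar (i.e.\ uniform) probability measure on the finite group $\pi_1(G)$, so this preimage has measure $\#M \cdot |\pi_1(H)|/|\pi_1(G)| = \#M/|Q|$. Combined with the previous step, this yields $\mathrm{FPP}(G_\infty(K,f,\alpha)) \geq \#M/|Q| > 0$, as claimed. The main point requiring care is the very first one, namely the compatibility of the specialization embedding with the self-similar structure so that labels of $g$ and of $\widetilde g$ coincide; once this is in hand, the remainder is a direct adaptation of the arguments already developed in \cref{section: self-similar extensions} and used in \cref{corollary: bad monodromy rational functions}.
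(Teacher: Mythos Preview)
Your proposal is correct and follows essentially the same route as the paper: embed $G_\infty(K,f,\alpha)$ into $G_\infty(K,f,t)=G_H$ via specialization, observe that elements whose first-level label lands in a bad coset of $Q$ fix an end by the argument of \cref{proposition: bad monodromy implies fpp>0}, and then use the hypothesis $\pi_1(G_\infty(K,f,\alpha))=\pi_1(G_\infty(K,f,t))$ to see that the preimage of the bad cosets has measure $\#M/|Q|$. The paper phrases the last two steps via the coset decomposition $G_\infty(K,f,\alpha)=\bigsqcup_{q\in Q}G_\infty(L_1,f,\alpha)q$ and the inclusion $G_\infty(L_1,f,\alpha)M\subseteq G_\infty(K,f,t)M$, but this is the same computation; your explicit remark that the specialization embedding intertwines the actions on $T$ and $T_\alpha$ is a point the paper uses implicitly.
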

\begin{proof}
As $\alpha\in K$ is not strictly post-critical, the group $G_\infty(K,f,\alpha)$ embeds into $G_\infty(K,f,t)$ as the decomposition subgroup of a prime $\mathfrak{P}$ in $K_\infty(f,t)$ above $(t-\alpha)$. The assumption 
$$\pi_1(G_\infty(K,f,t))=\pi_1(G_\infty(K,f,\alpha))$$
implies that
$$G_\infty(K,f,\alpha)=\bigsqcup_{q\in Q}G_\infty(L_1,f,\alpha)q,$$
with all cosets $G_\infty(L_1,f,\alpha)q$ non-empty. Thus, as 
$$G_\infty(L_1,f,\alpha)M\subseteq G_\infty(K,f,t)M$$
and every element in $G_\infty(K,f,t)M$ fixes an end in $\partial T$, we get
\begin{align*}
\mathrm{FPP}(G_\infty(K,f,\alpha))&\ge \frac{\#M}{|Q|}>0.\qedhere
\end{align*}
\end{proof}

\textcolor{teal}{Corollaries} \ref{corollary: bad monodromy rational functions} and \ref{corollary: specialization fpp} conclude the proof of \cref{Theorem: bad monodromy}. To conclude the paper, we give specific examples of rational polynomials with bad monodromy among unicritical polynomials.

\subsection{An example: unicritical polynomials}

Let $f\in K[z]$ be a polynomial. We say that $f$ is \textit{unicritical} if $f$ has a unique critical point in $K$. Note that a unicritical polynomial $f\in K[z]$ is linearly conjugate to a polynomial of the form $z^d+c$ with $c\in K$. 

We show that the result on the positivity of the fixed-point proportion of arithmetic iterated Galois groups obtained by Radi in \cite{SantiFPP} for the polynomial $x^d+1$ holds more generally for any unicritical polynomial $x^d+c$.

In what follows, we set $f(z):=z^d+c$ with $c\in K$. It is well known that
$$K_1(f,t)=K(\zeta_d,\sqrt[d]{t-c}),$$
where $\zeta_d$ is a $d$th primitive root of unity. Then
$$\pi_1(G_\infty(K,f,t))\cong\mathrm{Gal}(K_1(f,t)/K)= \mathrm{Gal}(K(\zeta_d,\sqrt[d]{t-c})/K)$$
and 
$$\pi_1(G_\infty(L_1, f, t))\cong \mathrm{Gal}(K(\zeta_d,\sqrt[d]{t-c})/K(\zeta_d)).$$
Let us assume now that $K:=\mathbb{Q}$ (or, more generally $K$ is a number field such that $\mathrm{Gal}(K(\zeta_d)/K)\cong (\mathbb{Z}/d\mathbb{Z})^\times$). As
$$\frac{\pi_1(G_\infty(\mathbb{Q},f,t))}{\pi_1(G_\infty(L_1, f, t))}\cong \mathrm{Gal}(\mathbb{Q}(\zeta_d)/\mathbb{Q})\cong (\mathbb{Z}/d\mathbb{Z})^\times,$$
we get a short exact sequence
$$1\longrightarrow G_\infty(L_1, f,t) \longrightarrow G_\infty(\mathbb{Q},f,t) \longrightarrow (\mathbb{Z}/d\mathbb{Z})^\times \longrightarrow 1.$$
In fact, the above yields an affine monodromy action
$$\pi_1(G_\infty(\mathbb{Q},f,t))\cong \mathrm{Aff}(d),$$
where
$$\mathrm{Aff}(d):=\{z\mapsto az+b\mid a\in (\mathbb{Z}/d\mathbb{Z})^\times\text{ and }b\in \mathbb{Z}/d\mathbb{Z}\}.$$
Under this identification, we have 
$$\pi_1(G_\infty(L_1, f, t))\cong \{z\mapsto z+b\mid b\in \mathbb{Z}/d\mathbb{Z}\}\trianglelefteq \mathrm{Aff}(d).$$

The group $\pi_1(G_\infty(\mathbb{Q},f,t))$ has complete monodromy over $\pi_1(G_\infty(L_1, f,t))$. Let $M\subset (\mathbb{Z}/d\mathbb{Z})^\times$ denote the set of bad cosets of $\pi_1(G_\infty(\mathbb{Q},f,t))$ over $\pi_1(G_\infty(L_1, f,t))$. Radi showed in \cite[Proposition 5.2]{SantiFPP} that 
$$\# M=d\prod_{p|d}\left(1-\frac{2}{p}\right)$$
and
$$\frac{\#M}{|(\mathbb{Z}/d\mathbb{Z})^\times|}=\frac{\# M}{\varphi(d)}=\prod_{p|d}\left(\frac{p-2}{p-1}\right)$$
where $\varphi(\cdot )$ denotes the Euler totient function. Therefore, if $d\ge 3$ is odd, \cref{Theorem: bad monodromy} yields
$$\mathrm{FPP}(G_\infty(\mathbb{Q},f,t))\ge \prod_{p|d}\left(\frac{p-2}{p-1}\right)>0$$
generalizing \cite[Theorem E]{SantiFPP} and establishing the first part of \cref{Corollary: Unicritical fpp}.

Now, let us consider $\alpha \in \mathbb{Q}$ not strictly post-critical for $f(z)=z^d+c$. As $\alpha\ne c=f(0)$, we have
$$f(z)-\alpha=\prod_{i=1}^{d} (z-\zeta_d^i \sqrt[d]{\alpha-c}).$$
Then, the monodromy group of the specialization $\pi_1(G_\infty(\mathbb{Q},f,\alpha))$ is isomorphic to the affine group $\mathrm{Aff}(d)$, i.e.
$$\pi_1(G_\infty(\mathbb{Q},f,\alpha))=\pi_1(G_\infty(\mathbb{Q},f,t)).$$
Hence
$$\mathrm{FPP}(G_\infty(\mathbb{Q},f,\alpha))\ge \prod_{p|d}\left(\frac{p-2}{p-1}\right)>0$$
by \cref{Theorem: bad monodromy}, concluding the proof of \cref{Corollary: Unicritical fpp}.



\bibliographystyle{unsrt}

\end{document}